\def\pg{\mathhexbox278}
\newcommand{\Cinf}{\ensuremath{\mathcal{C}^\infty}}
\newcommand{\D}{\ensuremath{\mathcal{D}}}
\newcommand{\G}{\ensuremath{\mathcal{G}}}
\renewcommand{\S}{\mathscr{S}}
\newcommand{\mb}[1]{\ensuremath{\mathbb{#1}}}
\newcommand{\N}{\mb{N}}
\newcommand{\R}{\mb{R}}
\newcommand{\C}{\mb{C}}
\renewcommand{\d}{\ensuremath{\partial}}
\newcommand{\diff}[1]{\frac{d}{d#1}}
\newfont{\bl}{msbm10 scaled \magstep2}
\newtheorem{theorem}{Theorem}[section]
\newtheorem{lemma}[theorem]{Lemma}
\newtheorem{proposition}[theorem]{Proposition}
\newtheorem{corollary}[theorem]{Corollary}
\theoremstyle{definition}
\newtheorem{remark}[theorem]{Remark}
\newtheorem{example}[theorem]{Example}
\newcommand{\beq}{\begin{equation}}
\newcommand{\eeq}{\end{equation}}
\newcommand{\isom}{\cong}
\newcommand{\col}{\colon}
\newcommand{\FT}[1]{\widehat{#1}}
\newcommand{\F}{\ensuremath{{\mathcal F}}}
\newcommand{\dis}[2]{\langle #1 , #2 \rangle}
\newcommand{\notmid}{\mid\kern-0.5em\not\kern0.5em}
\newcommand{\norm}[2]{{\| #1 \|}_{#2}}
\newcommand{\Norm}[1]{\norm{#1}{}}
\newcommand{\al}{\alpha}
\newcommand{\be}{\beta}
\newcommand{\ga}{\gamma}
\newcommand{\de}{\delta}
\newcommand{\eps}{\varepsilon}
\newcommand{\vphi}{\varphi}
\newcommand{\la}{\lambda}
\newcommand{\sig}{\sigma}
\renewcommand{\Re}{\ensuremath{\mathop{\mathrm{Re}}}}
\renewcommand{\Im}{\ensuremath{\mathop{\mathrm{Im}}}}
\newcommand{\ovl}[1]{\overline{#1}}
\newcommand{\ds}{\displaystyle}
\begin{document}

\pagestyle{plain}

\title{Limits of regularizations for generalized function solutions to the Schr\"odinger equation with `square root of delta' initial value}

\author{G\"unther H\"ormann}

\address{Fakult\"at f\"ur Mathematik\\
Universit\"at Wien, Austria}

\email{guenther.hoermann@univie.ac.at}

\thanks{Research supported by the FWF project P25326}

\subjclass[2010]{Primary: 46E99; Secondary: 46F30}

\keywords{generalized functions, regularized solutions, Schroedinger equation, invariant mean}

\date{\today}

\begin{abstract}
We briefly review results on generalized solutions to the Cauchy problem for linear Schr\"odinger-type equations with non-smooth principal part and their compatibility with classical and distributional solutions. In the main part, we study convergence properties of regularized solutions to the standard Schr\"odinger equation with initial values corresponding to `square roots' of Dirac measures in various duals of classical subspaces of the space of continuous functions. In particular, the main result establishes as limit the invariant mean on the space of almost periodic functions as the restriction of the Haar measure on the Bohr compactification of $\R^n$.
\end{abstract}

\maketitle


\section{Introduction}

The motivation to study Schr\"odinger-type linear partial differential operators with non-smooth coefficients can be drawn from at least two fields of mathematical physics: Geophysical models of seismic wave propagation near the earth's core and quantum dynamics of particles in singular potentials. In \cite{Hoermann:11} the basic structures of both types of models were combined into an abstract mathematical formulation and unique existence of solutions to the following Cauchy problem was shown in a setting allowing for discontinuous or distributional coefficients, initial data, and right-hand sides: With $T > 0$ arbitrary one obtains a unique generalized function $u$ on $\R^n \times [0,T]$ solving
\begin{align}
  \d_t u - \mathrm{i}\, \sum_{k=1}^n   \d_{x_k} (c_k 
    \d_{x_k} u) 
    - i V u &= f \label{SCPDE}\\
    u \mid_{t=0} &= g, \label{SCIC} 
\end{align}
where $c_k$ 
 ($k=1,\ldots,n$), 
 $V$, and $f$ are generalized functions on $\R^n \times [0,T]$ and $g$ is a generalized function on $\R^n$. Colombeau-generalized solutions to linear and nonlinear Schr\"odinger equations with constant coefficient principal part have been constructed previously in \cite{Bu:96,Sto:06b,Sto:06a}. The particular case of Schr\"odinger operators with $\delta$-potential is also settled in terms of non-standard analysis in \cite{Albeverio:88}, and a classic approach with quadratic forms and a Friedrichs extension is discussed briefly in \cite[Example 2.5.19]{Thirring:02}.

Differential operators of Schr\"odinger-type with non-smoothness  in the principal symbol arise as \emph{paraxial equations} in models of wave propagation based on narrow-angle symbol approximations and have been applied in various fields of optics or acoustic tomography, but also to seismic wave propagation near the core-mantle boundary inside the earth in \cite{dHHO:08}. The leading-order approximation leads to model equations of Schr\"odinger-type, where the material properties are encoded into the regularity structure of the coefficients in the principal part and in \cite{dHHO:08} a corresponding evolutionary system---meaning unique solvability of the corresponding Cauchy problem---has been established in an $L^2$-setting allowing the coefficients to be of H\"older- or Sobolev-type regularity below log-Lipschitz continuity. This result put the (H\"older or) Sobolev regularity of the solution in relation to the initial data regularity under lowest possible regularity assumptions on the coefficient, which is crucial in the so-called inverse media analysis of geophysics. 

In the context of quantum mechanics one is interested in allowing for the zero-order term $V$ in the  Schr\"odinger equation $\d_t u = i \Delta_x u + i V u$ to model a singular potential.  Moreover, in the classical $L^2$ theory one has initial data $u \!\mid_{t = 0} = u_0$ such that $|u_0|^2$ corresponds to an initial probability density and $|u(.,t)|^2$ is then usually interpreted as the evolved probability density at time $t$.  We may now think of this situation in more general terms as 
$|u_0|^2$ representing a given initial probability \emph{measure} $\mu$ on $\R^n$, i.e., $u_0$ as generalized initial data representing a `regularized square root of a given probability measure', and of $\mu^t := |u(.,t)|^2$ as the time evolved regularized Borel probability measure.
 A result in \cite{Hoermann:11}, reviewed below in Section 2, shows how to construct a Colombeau generalized function whose square is associated with a given probability measure in the sense of distributional shadows.  We may mention that questions about squares of distributional objects as measures arose also in general relativity theory (cf.\ \cite[Section 5.3]{GKOS:01} and \cite{KS:99,Steinbauer:97,Steinbauer:98,SV:06}).  A regularization approach for powers of delta as initial values in semilinear heat equations has been employed in \cite{NPR:05}.

In Section 2 we review the regularization approach to generalized functions in the sense of Colombeau, square roots of probability measures in this framework,  the main result on unique existence of generalized solutions to the Schr\"odinger-type Cauchy problem (\ref{SCPDE}-\ref{SCIC}), and the relation of Colombeau generalized solutions with classical and distributional solution concepts. Section 3 then discusses in detail the convergence properties of solutions corresponding to regularizations of initial values modeling square roots of a Dirac measure in the dual spaces of classical subspaces of the space of continuous functions. The main result is Theorem \ref{mainthm} establishing the (unique) invariant mean on almost periodic functions as the limit.
 

\section{Regularizations, generalized function solutions, and coherence properties} 

In this section, we review the main results of \cite{Hoermann:11}. Before going into details, we recall a few basics from the theory of Colombeau generalized functions. 

The fundamental idea of Colombeau-type regularization methods is to model non-smooth objects by approximating nets of smooth functions, convergent or not, but with \emph{moderate} asymptotics and to identify regularizing nets whose differences compared to the moderateness scale are \emph{negligible}. For a modern introduction to Colombeau algebras we refer to \cite{GKOS:01}. Here we will also make use of constructions and notations from \cite{Garetto:05b}, where  generalized functions based on a locally convex topological vector space $E$ are defined:
Let $E$ be a locally convex topological vector space whose topology is given by the family of seminorms $\{p_j\}_{j\in J}$. The elements of  
$$
 \mathcal{M}_E := \{(u_\eps)_\eps\in E^{(0,1]}:\, \forall j\in J\,\, \exists N\in\N\quad p_j(u_\eps)=O(\eps^{-N})\, \text{as}\, \eps\to 0\}
$$
and
$$
 \mathcal{N}_E := \{(u_\eps)_\eps\in E^{(0,1]}:\, \forall j\in J\,\, \forall q\in\N\quad p_j(u_\eps)=O(\eps^{q})\, \text{as}\, \eps\to 0\},
$$ 
are called \emph{$E$-moderate} and \emph{$E$-negligible}, respectively. With operations defined componentwise, e.g., $(u_\eps) + (v_\eps) := (u_\eps + v_\eps)$ etc., $\mathcal{N}_E$ becomes a vector subspace of $\mathcal{M}_E$.  We define the \emph{generalized functions based on $E$} as the factor space $\G_E := \mathcal{M}_E / \mathcal{N}_E$. If $E$ is a differential algebra then $\mathcal{N}_E$ is an ideal in $\mathcal{M}_E$ and $\G_E$ is  a differential algebra as well.

Particular choices of $E$ reproduce the standard Colombeau algebras of generalized functions. For example, $E=\C$ with the absolute value as norm yields the generalized complex numbers $\G_E = \widetilde{\C}$; for $\Omega \subseteq \R^d$ open, $E=\Cinf(\Omega)$ with the topology of compact uniform convergence of all derivatives provides the so-called special Colombeau algebra $\G_E=\G(\Omega)$. Recall that $\Omega \mapsto \G(\Omega)$ is a fine sheaf, thus, in particular, the restriction $u|_B$ of $u\in\G(\Omega)$ to an arbitrary open subset $B \subseteq \Omega$ is well-defined and yields $u|_B \in \G(B)$. Moreover, we may embed $\D'(\Omega)$ into $\G(\Omega)$ by appropriate localization and convolution regularization. 

If $E \subseteq \D'(\Omega)$, then certain generalized functions can be projected into the space of distributions by taking  weak limits: We say that $u \in \G_E$ is \emph{associated} with $w \in \D'(\Omega)$, if $u_\eps \to w$ in $\D'(\Omega)$ as $\eps \to 0$ holds for any (hence every) representative $(u_\eps)$ of $u$. This fact is also denoted by $u \approx w$. 

Consider open strips of the form $\Omega_T = \R^n \times\, ]0,T[ \subseteq \R^{n+1}$ (with $T > 0$ arbitrary) and  the spaces $E = H^\infty({\Omega_T}) = \{ h \in \Cinf(\Omega_T) : \d^\al h \in L^2(\Omega_T) \; \forall \al\in \N^{n+1}\}$ with the family of (semi-)norms 
$$
  \norm{h}{H^k} = \Big( \sum_{|\al| \leq k} 
    \norm{\d^\al h}{L^2}^2\Big)^{1/2}
   \quad (k\in \N), 
$$
as well as   
$E = W^{\infty,\infty}({\Omega_T}) = \{ h \in \Cinf(\Omega_T) : \d^\al h \in L^\infty(\Omega_T) \; \forall \al\in \N^{n+1}\}$  with the family of (semi-)norms 
$$
  \norm{h}{W^{k,\infty}} = \max_{|\al| \leq k} \norm{\d^\al h}{L^\infty} \quad (k\in \N). 
$$
Clearly, $\Omega_T$  satisfies the strong local Lipschitz property \cite[Chapter IV, 4.6, p.\ 66]{Adams:75}, hence every element of $H^\infty(\Omega_T)$ and $W^{\infty,\infty}(\Omega_T)$ belongs to $\Cinf(\ovl{\Omega_T})$ by the Sobolev embedding theorem \cite[Chapter V, Theorem 5.4, Part II, p.\ 98]{Adams:75}.

In the sequel, we will employ the following notation 
$$
  \G_{L^2}(\R^n \times [0,T]) := \G_{H^\infty({\Omega_T})}
     \quad\text{ and }\quad
   \G_{L^\infty}(\R^n \times [0,T]) := 
   \G_{W^{\infty,\infty}({\Omega_T})}.
$$
Thus, we will represent a generalized function $u \in \G_{L^2}(\R^n \times [0,T])$ by a net $(u_\eps)$ with the moderateness property
$$
    \forall k \, \exists m: \quad 
    \norm{u_{\eps}}{H^k} = O(\eps^{-m}) \quad (\eps \to 0).
$$
If $(\widetilde{u_{\eps}})$ is another representative of $u$, then 
$$
    \forall k \, \forall p: \quad 
    \norm{u_{\eps} - \widetilde{u_{\eps}}}{H^k} = O(\eps^{p}) 
    \quad (\eps \to 0).
$$
Similar constructions and notations are used in case of $E = H^\infty(\R^n)$ and $E = W^{\infty,\infty}(\R^n)$. Note that by Young's inequality (\cite[Proposition 8.9.(a)]{Folland:99}) any standard convolution regularization with a scaled mollifier of Schwartz class provides embeddings $L^2 \hookrightarrow \G_{L^2}$ and $L^p \hookrightarrow \G_{L^\infty}$ ($1 \leq p \leq \infty$).


As an example of a detailed regularization model we recall a result from \cite{Hoermann:11}, announced above in the introduction, on Colombeau generalized positive square roots of arbitrary probability measures, which can serve as initial values in the Cauchy problem (\ref{SCPDE}-\ref{SCIC}).

\begin{proposition}\label{Wurzel} Let $\mu$ be a Borel probability measure on $\R^n$. Choose $\rho \in L^1(\R^n) \cap W^{\infty,\infty}(\R^n)$ to be positive with $\int \rho = 1$ and satisfying $\rho(x) \geq |x|^{-m_0}$  when $|x| \geq 1$ with some $m_0 > n$. Set  $\rho_\eps(x) = \frac{1}{\eps^n}\rho(\frac{x}{\eps})$ and $h_\eps := \mu * \rho_\eps$, then the following hold:

\noindent (i)  $h_\eps$ is positive and the net $(\sqrt{h_\eps})$  represents an element $\phi \in \G(\R^n)$ such that $\phi^2 \approx \mu$;

\noindent (ii) there exists  $g \in \G_{L^2}(\R^n)$  such that $g^2 \approx \mu$ and  the class of $(g_\eps|_{\Omega})$ is equal to $\phi|_\Omega$ in $\G(\Omega)$, or by slight abuse of notation $g|_\Omega = \phi|_\Omega$, for every bounded open subset $\Omega \subseteq \R^n$.
\end{proposition}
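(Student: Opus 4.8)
The plan is to exploit the two hypotheses on $\rho$ for separate purposes: strict positivity of $\rho$ guarantees that $\sqrt{h_\eps}$ is smooth, while the power-law lower bound $\rho(x)\ge |x|^{-m_0}$ supplies the quantitative control on $h_\eps$ from below that is needed for moderateness. For part (i) I would first record the routine facts about $h_\eps=\mu*\rho_\eps$: since $\rho\in W^{\infty,\infty}(\R^n)$ and $\mu$ is finite, differentiation under the integral sign gives $h_\eps\in\Cinf(\R^n)$ together with the moderate \emph{upper} bounds $\linf{\d^\al h_\eps}\le \norm{\d^\al\rho}{L^\infty}\,\eps^{-n-|\al|}$; and since $\rho>0$ everywhere and $\mu(\R^n)=1$, the function $h_\eps$ is strictly positive, so $\sqrt{h_\eps}\in\Cinf(\R^n)$. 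The decisive step---and the one I expect to be the main obstacle---is a pointwise \emph{lower} bound $h_\eps(x)\ge c\,\eps^{N}$ valid uniformly for $x$ in a fixed compact set $K$ and small $\eps$.

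I would obtain this lower bound by fixing $L$ with $\mu(B(0,L))\ge 1/2$ and splitting the mass into the part carried by $B(x,\eps)$ and the part carried by $B(0,L)\setminus B(x,\eps)$, at least one of which has $\mu$-mass $\ge 1/4$. On the first, continuity and positivity of $\rho$ near the origin give $h_\eps(x)\ge c\,\eps^{-n}$; on the second, the estimate $\rho((x-y)/\eps)\ge(\eps/|x-y|)^{m_0}$ together with $|x-y|\le C_K$ gives $h_\eps(x)\ge c\,\eps^{m_0-n}$. In either case $h_\eps(x)$ is bounded below by a fixed power of $\eps$. With both one-sided bounds in hand, applying the Fa\`a di Bruno formula to $\sqrt{\,\cdot\,}\circ h_\eps$ writes $\d^\be\sqrt{h_\eps}$ as a sum of terms $h_\eps^{1/2-k}\prod_i \d^{\ga_i}h_\eps$; estimating the factors $\d^{\ga_i}h_\eps$ by the upper bounds and the negative power $h_\eps^{1/2-k}$ by the lower bound shows $\sqrt{h_\eps}$ is $\G(\R^n)$-moderate, so $\phi$ is well defined with $\phi^2$ represented by $(h_\eps)$. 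Finally $\phi^2\approx\mu$ reduces to $h_\eps\to\mu$ in $\D'(\R^n)$, i.e. to $\rho_\eps$ being an approximate identity: for $\psi\in\Cinfc(\R^n)$ one has $\dis{\mu*\rho_\eps}{\psi}=\dis{\mu}{\check\rho_\eps*\psi}\to\dis{\mu}{\psi}$, since $\check\rho_\eps*\psi\to\psi$ uniformly with uniformly bounded sup-norm and $\mu$ is finite.

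For part (ii) the obstruction is that $\sqrt{h_\eps}$ need not lie in $H^\infty(\R^n)$, because the slow power-law decay of $\rho$ survives in the tails of $h_\eps$. I would cure this by an $\eps$-dependent cutoff at infinity: choose $\chi\in\Cinfc(\R^n)$ with $\chi\equiv 1$ near $0$ and set $g_\eps:=\chi(\eps\,\cdot)\,\sqrt{h_\eps}$. Each $g_\eps$ has compact support, hence $g_\eps\in H^\infty(\R^n)$, and moderateness of $(g_\eps)$ follows from the Leibniz rule: the factors $\d^\be\chi(\eps\,\cdot)$ are bounded, the derivatives of $\sqrt{h_\eps}$ are controlled on the ball of radius $O(1/\eps)$ supporting $\chi(\eps\,\cdot)$ (here the lower bound is used on that growing ball, costing only a larger fixed power of $\eps$), and passing from $\linf{\cdot}$ to $\ltwo{\cdot}$ contributes the volume factor $|\supp\chi(\eps\,\cdot)|=O(\eps^{-n})$. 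Thus $g\in\G_{L^2}(\R^n)$ is well defined.

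It remains to verify the two coherence properties, and these are now essentially immediate. For any bounded open $\Om\subseteq\R^n$ one has $\chi(\eps\,\cdot)\equiv 1$ on $\Om$ once $\eps$ is small, so $g_\eps=\sqrt{h_\eps}$ on $\Om$ eventually; hence $g_\eps-\sqrt{h_\eps}$ vanishes identically on $\Om$ for small $\eps$, which is in particular $\G(\Om)$-negligible, giving $g|_\Om=\phi|_\Om$ in $\G(\Om)$. The same eventual coincidence shows that $g_\eps^2=\chi(\eps\,\cdot)^2\,h_\eps$ agrees with $h_\eps$ against any fixed test function for small $\eps$, so $g^2\approx\mu$ follows from the distributional limit already established in part (i).
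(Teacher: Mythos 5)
Your proposal is correct, and it follows what this paper indicates as the intended argument: the paper itself states Proposition \ref{Wurzel} without proof (it is recalled from \cite{Hoermann:11}), but the role you assign to the tail condition $\rho(x)\geq|x|^{-m_0}$ (a pointwise lower bound $h_\eps \geq c\,\eps^N$ feeding into Fa\`a di Bruno estimates for $\sqrt{h_\eps}$) is exactly its intended use, and Remark \ref{Wurzel_rem} explicitly confirms that part (ii) is obtained by the same kind of cut-off procedure you employ. In particular, your $\eps$-dependent cut-off $\chi(\eps\,\cdot)$, with the lower bound re-derived on the ball of radius $O(1/\eps)$ at the cost of a larger fixed power of $\eps$, and the eventual coincidence $g_\eps=\sqrt{h_\eps}$ on bounded sets, correctly delivers both coherence claims.
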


\begin{remark}\label{Wurzel_rem} 
For specific choices of $\rho$ in $L^1(\R^n) \cap H^\infty(\R^n)$ such that  $\sqrt{\rho} \in H^\infty(\R^n)$ we could obtain that $(\phi_\eps)$ is also $H^\infty$-moderate and directly defines a square root in $\G_{L^2}(\R^n)$ without having to undergo the cut-off procedure in part (ii) of Proposition \ref{Wurzel} (which, on the other hand, cannot be avoided for general $\rho \in H^\infty$). For example, putting $\rho(x) = c (1+|x|^2)^{-(n+1)/2}$ with a suitable normalization constant $c > 0$ provides such a mollifier. However, the  above formulation leaves more flexibility in adapting the regularization to particular applications.
\end{remark}

We come now to the main existence and uniqueness result for generalized solutions to the Cauchy problem (\ref{SCPDE}-\ref{SCIC}). Recall that a regularization of an arbitrary finite-order distribution which meets the log-type conditions on the coefficients $c_k$ and $V$ in the following statement is easily achieved by employing a re-scaled mollification process as described in \cite{O:89}.

\begin{theorem}\label{exunthm} Let $c_k$ 
 ($k=1,\ldots,n$) and $V$ be generalized functions in $\G_{L^{\infty}}(\R^n \times [0,T])$ possessing representing nets of real-valued functions, $f$ in  $\G_{L^2}(\R^n \times [0,T])$, and $g$ be in $\G_{L^2}(\R^n)$. Suppose\\ 
 (a) $c_k$ ($k=1\ldots,n$) and $V$ are of  log-type, that is,  for some (hence every) representative $(c_{k \eps})$ of $c_k$  and $(V_\eps)$ of $V$ we have 
$\norm{\d_t {c_{k \eps}}}{L^\infty} = O(\log({1}/{\eps}))$ and  $\norm{\d_t {V_{\eps}}}{L^\infty} = O(\log({1}/{\eps}))$ as $\eps \to 0$\\
and\\ 
(b) that the positivity conditions $c_{k \eps}(x,t) \geq c_0$ for all $(x,t) \in \R^n \times [0,T]$, $\eps\in\,]0,1]$, $k=1,\ldots,n$ with some constant $c_0 > 0$ hold (hence  with $c_0 / 2$  for any other representative and small $\eps$).\\[1mm]
Then the Cauchy problem (\ref{SCPDE}-\ref{SCIC})
has a unique solution $u \in \G_{L^2}(\R^n \times [0,T])$. 
\end{theorem}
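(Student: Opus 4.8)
The plan is to follow the standard $\eps$-wise strategy for existence and uniqueness in Colombeau algebras: fix representatives of the data, solve the resulting family of classical Cauchy problems with smooth coefficients, establish moderate bounds on the solution net, and finally propagate negligibility to obtain well-definedness and uniqueness of the class in $\G_{L^2}(\R^n \times [0,T])$. First I would fix real-valued representatives $(c_{k\eps})$, $(V_\eps)$ of the log-type coefficients and representatives $(f_\eps)$, $(g_\eps)$ of the data. For each fixed $\eps \in (0,1]$ the spatial operator $A_\eps(t) = \sum_k \d_{x_k}(c_{k\eps}\d_{x_k}\,\cdot\,)$ has smooth bounded coefficients and, thanks to the positivity $c_{k\eps}\geq c_0$, is uniformly elliptic and symmetric; hence $\mathrm{i}A_\eps(t) + \mathrm{i}V_\eps$ generates a unitary evolution on $L^2(\R^n)$. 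Classical evolution-equation theory for time-dependent generators then yields a unique solution $u_\eps \in C([0,T],H^\infty(\R^n))$ of the regularized problem $\d_t u_\eps = \mathrm{i}A_\eps(t)u_\eps + \mathrm{i}V_\eps u_\eps + f_\eps$ with $u_\eps|_{t=0}=g_\eps$.

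The heart of the argument is the family of energy estimates showing that $(u_\eps)$ is $H^\infty(\Om_T)$-moderate. I would introduce the time-dependent self-adjoint Hamiltonian $H_\eps(t) = -A_\eps(t) - V_\eps + \la$, with $\la$ chosen (using $c_{k\eps}\geq c_0$ and the boundedness of $V_\eps$) so that $H_\eps(t)\geq 1$ is positive and elliptic, and use the adapted energies $E_{k,\eps}(t) = \norm{H_\eps(t)^{k/2} u_\eps(t)}{L^2}^2$, which are equivalent to $\nsobolev{u_\eps(t)}{k}^2$ with constants controlled by the (moderate) $W^{k,\infty}$-norms of the coefficients. The key observation is that, because the principal part is skew-adjoint, the Schrödinger flow is unitary with respect to these graph norms: writing $\d_t u_\eps = -\mathrm{i}H_\eps(t)u_\eps + (\mathrm{i}\la u_\eps + f_\eps)$, the leading contributions to $\frac{d}{dt}E_{k,\eps}$ cancel, so that no dispersive loss of derivatives occurs. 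What survives is governed by $\dot H_\eps = -\d_t A_\eps - \d_t V_\eps$ and by the lower-order source. This is exactly where the log-type hypothesis is decisive: the use of half powers $H_\eps(t)^{k/2}$ keeps $\dot H_\eps$ of the same order as $H_\eps(t)^{k/2}$, carrying a single factor $\d_t c_{k\eps}$ or $\d_t V_\eps$, so a Gronwall argument with exponent of order $\|\d_t c_{k\eps}\|_{L^\infty} + \|\d_t V_\eps\|_{L^\infty} = O(\log(1/\eps))$ produces growth of order $\eps^{-CT}$, i.e.\ polynomial in $1/\eps$, keeping the bounds moderate rather than exponentially large.

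Converting time derivatives into spatial ones via the equation (recursively, $\d_t u_\eps$ and its higher time derivatives reduce to spatial derivatives of $u_\eps$, $c_{k\eps}$, $V_\eps$ and $f_\eps$, all moderate) and integrating the $C([0,T],H^k)$-bounds over the bounded interval $[0,T]$ upgrades them to the spacetime estimates $\norm{u_\eps}{H^k(\Om_T)} = O(\eps^{-m})$ required for $H^\infty(\Om_T)$-moderateness, so $(u_\eps)$ represents an element $u \in \G_{L^2}(\R^n\times[0,T])$. To obtain well-definedness and uniqueness I would run the identical energy scheme on a difference: if $(\widetilde{u_\eps})$ is the solution built from a second choice of representatives, then $w_\eps = u_\eps - \widetilde{u_\eps}$ solves a problem with the same principal part but with right-hand side and initial datum consisting of negligible nets multiplied by derivatives of the (moderate) solution $\widetilde{u_\eps}$ and of the coefficients, hence negligible. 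The same log-type Gronwall estimate then forces $(w_\eps)$ to be $H^\infty(\Om_T)$-negligible, so the class of $u$ is independent of the representatives; taking the two data to be equal, the very same computation shows that any two $\G_{L^2}$-solutions coincide.

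I expect the main obstacle to be precisely these higher-order moderate energy estimates: one must choose the energies adapted to the time-dependent Hamiltonian $H_\eps(t)$ so that the Schrödinger-type loss of derivatives is avoided, and then carefully track how the spatial derivatives of the coefficients (moderate) enter the equivalence constants and the commutators between $H_\eps(t)^{k/2}$ and the zero-order term $V_\eps$, while isolating the single $\d_t$-coefficient contribution that the log-type bounds control. This symmetrizing energy technique is the one underlying the $L^2$ well-posedness theory developed in \cite{dHHO:08}, adapted here to yield moderateness and negligibility in the Colombeau sense.
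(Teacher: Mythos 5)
The paper you were given contains no proof of this theorem: Section 2 states it explicitly as a reviewed result of \cite{Hoermann:11}, so the only meaningful comparison is with the proof in that reference (whose energy technique, as you note, goes back to \cite{dHHO:08}). Your proposal reconstructs essentially that proof's strategy --- $\eps$-wise classical solvability, higher-order energy estimates adapted to the spatial operator so that the skew-adjoint principal part cancels and only the $\partial_t c_{k\eps}$, $\partial_t V_\eps$ contributions enter the Gronwall exponent (log-type, hence $\eps^{-CT}$, i.e.\ moderate growth), with the same estimates applied to differences yielding negligibility, well-definedness and uniqueness --- the one notable deviation being your use of fractional powers $H_\eps(t)^{k/2}$ of a time-dependent operator, whose time-differentiation is technically delicate, where the reference avoids this by working with integer powers of the operator and directly weighted energies.
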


\begin{remark}[Bohmian flow] If $u$ is a generalized solution to a Schr\"odinger equation according to the above theorem, then we may define the associated \emph{generalized Bohmian current vector field} 
$$
     |u|^2 \d_t + \sum_{k=1}^n \Im (\ovl{u} \, \d_{x_k}\! u) \, \d_{x_k}.
$$
In this way, the approach of Bohmian mechanics can be extended to the case of singular initial data, which cause the current vector field to be non-smooth. For example, the flows for Gaussian regularizations of a $\delta$ initial value have been sketched in \cite[Subsection 6.1]{GR:02} and could be put in the context of generalized flows. Note that with Gaussian wave packets, the limiting behavior at any $t \neq 0$ is $|u_\eps(.,t)|^2 \to 1 / (4 \pi |t|)$ as $\eps \to 0$ (compare also with the observation in \cite[Section 3.3, Example 1]{Rauch:91}).
\end{remark}


In case of smooth coefficients a simple integration by parts argument shows that any solution to the Cauchy problem obtained from the variational method as in \cite[Chapter XVIII, \pg 7, Section 1]{DL:V5}) is a solution in the sense of distributions as well. In addition, the following result from \cite{Hoermann:11} shows further coherence with the Colombeau generalized solution.

\begin{corollary}\label{cor} Let $V$ 
and $c_k$ ($k=1,\ldots,n$) belong to $C^\infty(\Omega_T) \cap L^\infty(\Omega_T)$ with bounded time derivatives of first-order,  $g_0 \in H^1(\R^n)$, and $f_0 \in C^1([0,T],L^2(\R^n))$. Let $u$ denote the unique Colombeau generalized solution to the Cauchy problem (\ref{SCPDE}-\ref{SCIC}), where $g$, $f$ denote standard embeddings of $g_0$, $f_0$, respectively. Then $u \approx w$, where $w \in C([0,T],H^1(\R^n))$ is the unique distributional solution obtained from the variational method. 
\end{corollary}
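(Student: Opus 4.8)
The plan is to exhibit a representative $(u_\eps)$ of the Colombeau solution $u$ that converges to the variational solution $w$ in the strong topology of $C([0,T],L^2(\R^n))$; since this implies convergence in $\D'(\Om_T)$, it yields $u \approx w$. The central tool is the basic $L^2$ energy estimate for the Schr\"odinger-type operator, applied to the difference $u_\eps - w$, combined with the fact that for \emph{smooth} coefficients $c_k$, $V$ the embedded Colombeau data reduce, modulo negligible nets, to mollifications of $g_0$ and $f_0$.

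First I would fix convenient representatives. Because $c_k$ and $V$ are smooth, their canonical embeddings into $\G_{L^\infty}(\R^n \times [0,T])$ agree, modulo $W^{\infty,\infty}$-negligible nets, with the constant nets $(c_k)_\eps$ and $(V)_\eps$; by well-definedness of the generalized solution (its independence of the choice of representatives, guaranteed by Theorem \ref{exunthm}) I may therefore take the representatives $c_{k\eps} = c_k$ and $V_\eps = V$ to be the exact smooth functions. For the data I use $g_\eps = g_0 * \rho_\eps$ and $f_\eps = f_0 * \rho_\eps$ obtained from convolution with a scaled mollifier (using an extension across $t=0,T$, legitimate by the strong local Lipschitz property of $\Om_T$), so that $g_\eps \to g_0$ in $H^1(\R^n)$ and $f_\eps \to f_0$ in $C([0,T],L^2(\R^n))$. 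With these choices the net $(u_\eps)$ of smooth solutions to
\[
  \d_t u_\eps - \mathrm{i}\sum_{k=1}^n \d_{x_k}(c_k\,\d_{x_k} u_\eps) - \mathrm{i}V u_\eps = f_\eps, \qquad u_\eps|_{t=0} = g_\eps,
\]
represents $u$.

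Next I would set $v_\eps := u_\eps - w$ and exploit linearity: $v_\eps$ solves the same Cauchy problem with right-hand side $f_\eps - f_0$ and initial value $g_\eps - g_0$. Since $c_k$ and $V$ are real-valued, one integration by parts makes the contributions of the principal part and the potential purely imaginary, so that
\[
  \frac{d}{dt}\ltwo{v_\eps(\cdot,t)}^2 = 2\Re\,\langle \d_t v_\eps, v_\eps\rangle = 2\Re\,\langle f_\eps - f_0, v_\eps\rangle .
\]
Integrating and applying a Gronwall argument gives
\[
  \sup_{0\le t\le T}\ltwo{v_\eps(\cdot,t)} \le \ltwo{g_\eps - g_0} + \int_0^T \ltwo{f_\eps(\cdot,s) - f_0(\cdot,s)}\, ds ,
\]
and the right-hand side tends to $0$ by the choice of representatives. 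Hence $u_\eps \to w$ in $C([0,T],L^2(\R^n))$, and in particular in $\D'(\Om_T)$, which is the assertion $u \approx w$.

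The point that needs care---and which I expect to be the main technical obstacle---is the rigorous justification of the energy identity in view of the limited regularity of $w$, which lies only in $C([0,T],H^1(\R^n))$ with $\d_t w \in C([0,T],H^{-1}(\R^n))$. The differentiation of $\ltwo{v_\eps}^2$ and the replacement of $\langle \d_t v_\eps, v_\eps\rangle$ by the $H^{-1}$--$H^1$ duality pairing must be carried out at the level of the variational formulation, via the standard lemma that $\frac{d}{dt}\ltwo{v}^2 = 2\Re\,\langle \d_t v, v\rangle$ holds for $v \in L^2([0,T],H^1)$ with $\d_t v \in L^2([0,T],H^{-1})$; and the identity $\langle \d_{x_k}(c_k\,\d_{x_k} v_\eps), v_\eps\rangle = -\langle c_k\,\d_{x_k} v_\eps, \d_{x_k} v_\eps\rangle$, whose real part vanishes because $c_k$ is real, is valid only as a statement about $H^1$-functions. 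Verifying that the variational solution $w$ genuinely satisfies these identities---together with confirming that the smooth-coefficient representatives may indeed be taken exactly equal to $c_k$, $V$---is where the bulk of the work lies, whereas the convergence of the data and the Gronwall step are then routine.
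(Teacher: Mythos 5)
Your overall plan---an $L^2$ energy estimate for the difference $v_\eps = u_\eps - w$ combined with convergence of the mollified data---is the natural one, and it matches the spirit of the argument in \cite{Hoermann:11} (note that the present paper does not reprove Corollary \ref{cor} at all; it only quotes it from that reference). The point you single out as the main technical obstacle, namely justifying $\frac{d}{dt}\|v_\eps\|_{L^2}^2 = 2\Re\langle \d_t v_\eps, v_\eps\rangle$ and the integration by parts at the regularity level $w \in C([0,T],H^1(\R^n))$, $\d_t w \in C([0,T],H^{-1}(\R^n))$, is indeed needed, but it is the standard part.

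The genuine gap is the step you dispose of first: you may \emph{not} take $c_{k\eps} = c_k$ and $V_\eps = V$ as representatives. The hypotheses only place $c_k$, $V$ in $C^\infty(\Omega_T) \cap L^\infty(\Omega_T)$ with bounded first-order \emph{time} derivatives; no bound on any other derivative is assumed. Hence the constant nets need not lie in $W^{\infty,\infty}(\Omega_T)$ at all --- take $c_k(x,t) = 2 + \sin(|x|^2)$, which satisfies every hypothesis of the corollary, including positivity and the log-type condition, yet has unbounded derivatives of all orders $\geq 1$ --- so they are not $W^{\infty,\infty}$-moderate, do not define elements of $\G_{L^\infty}(\R^n \times [0,T])$, and Theorem \ref{exunthm} cannot be invoked with them; the appeal to ``independence of the choice of representatives'' is vacuous because there is no common generalized function of which both nets are representatives. (Even when the coefficients do happen to lie in $W^{\infty,\infty}$, identifying the constant net with the mollifier embedding modulo negligible nets requires a mollifier with vanishing higher moments.) The coefficients entering the Colombeau problem are therefore genuinely mollified nets $c_{k\eps} = c_k \ast \rho_\eps$, $V_\eps = V \ast \rho_\eps$ (cf.\ the remark on re-scaled mollification preceding Theorem \ref{exunthm}), and your difference $v_\eps$ then solves the equation with the extra source terms $i\sum_k \d_{x_k}\big((c_{k\eps} - c_k)\,\d_{x_k} w\big) + i\,(V_\eps - V)\,w$ on the right-hand side. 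The zero-order term is harmless, but the principal-part term lies only in $H^{-1}(\R^n)$, so in the energy estimate it must be paired against $\d_{x_k} v_\eps$, i.e.\ bounded by $\|(c_{k\eps}-c_k)\,\d_{x_k} w\|_{L^2}\,\|\d_{x_k} v_\eps\|_{L^2}$. This forces you to prove a \emph{first-order} energy estimate for the regularized problems that is uniform in $\eps$, giving $\sup_{t,\eps}\|\d_{x_k} u_\eps(\cdot,t)\|_{L^2} < \infty$ (this is precisely where the assumptions of bounded $\d_t c_k$, $\d_t V$ and $f_0 \in C^1([0,T],L^2(\R^n))$ are consumed), together with $\|(c_{k\eps}-c_k)\,\d_{x_k} w\|_{L^2} \to 0$ by dominated convergence. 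That uniform $H^1$ machinery is the actual core of the proof and is entirely absent from your argument; the moderateness bounds supplied by Theorem \ref{exunthm} only give $\|u_\eps\|_{H^1} = O(\eps^{-N})$, which is useless here.
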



\section{Limit behavior of solutions for initial value regularizations corresponding to  `square roots' of probability measures}

\subsection{General observations}

We consider a kind of positive square root of the probability measure $\mu$ on $\R^n$ represented by $(\sqrt{\mu \ast \rho_\eps})_{\eps\in\,]0,1]}$, where $\rho$ is a mollifier similarly as in Proposition \ref{Wurzel}, but drop the requirement of smoothness and moderateness of the net $(\rho_\eps)$, since we want to focus here on ``generic convergence properties'' of the regularizations instead of investigating more structural aspects of Colombeau-type differential algebras. We simply assume for the mollifier $\rho$ that 
\beq\tag{M}
   \rho \in L^1(\R^n), \rho \geq 0, \sqrt{\rho} \in L^1(\R^n), \int_{\R^n} \rho(x) \, dx = 1
\eeq
(note that also $\sqrt{\rho} \in L^2(\R)$ is implied by this condition) 
and obtain a standard delta regularization by $\rho_\eps (x) := \frac{1}{\eps^n} \rho(\frac{x}{\eps})$, which satisfies
\begin{multline}\tag{R}
    \mu \ast \rho_\eps \to \mu \text{ as $\eps \to 0$  in $\S'(\R^n)$ as well as weakly (in the sense of probability theory,}\\ \text{ or in distribution) in the  space $M(\R^n)$  of finite complex Borel measures on $\R^n$, i.e.,}\\  \lim_{\eps \to 0} \int \!\! f (x) (\mu \ast \rho_\eps) (x) \, dx = \int \!\! f \, d\mu \text{ for every $f \in C_b(\R^n)$ (bounded continuous functions on $\R^n$)}.
\end{multline}

\begin{remark} 
\begin{trivlist}
\item{(i)} Weak convergence in the sense of probability theory  means convergence with respect to the $\sig(M(\R^n), C_b(\R^n))$-topology defined on $M(\R^n)$ via the dual pair $(M(\R^n), C_b(\R^n))$ with $(\mu,f) \mapsto \int_{\R^n} \! f \,d\mu$ (non-degeneracy of this pairing follows from \cite[Kapitel VIII, Satz 4.6]{Elstrodt:11}). 

\item{(ii)} Recall the following results on the classical normed dual spaces (with $C_0(\R^n)$ denoting the continuous functions on $\R^n$ vanishing at infinity): $C_0(\R^n)' \isom M(\R^n)$ by the Riesz representation theorem (cf.\ \cite[Chapter III, 5.7]{Conway:90}),  $C_b(\R^n)' \isom M(\beta \R^n)$ with $\beta \R^n$ denoting the Stone-\v{C}ech compactification of $\R^n$ (cf.\ \cite[Chapter V, Corollary 6.4]{Conway:90}), which also happens to be  the spectrum (or maximal ideal space) of the Abelian $C^*$-algebra $C_b(\R^n)$ and can be constructed as the weak* closure of $\{ \delta_x \mid x \in \R^n\}$ in $C_b(\R^n)'$.

\item{(iii)} If $\mu = \de$ we have $\rho_\eps \to \de$, but it is easily seen that $\sqrt{\rho_\eps} \to 0$ in the sense of distributions by action on a test function $\vphi$ upon substituting $y = x/\eps$ in $\int \!\sqrt{\rho_\eps(x)}\, \vphi(x)dx = \eps^{n/2} \int\! \sqrt{\rho(y)}\, \vphi(\eps y)dy$ and applying the dominated convergence theorem (thereby using that $\sqrt{\rho} \in L^1$).  Similar effects have also been observed  in the generalized function model of ultrarelativistic Reissner-Nordstr{\o}m fields in \cite[Equations (15) and (17)]{Steinbauer:97} and are typical of so-called model delta net regularizations in the form $\rho_\eps(x) = \rho(x/\eps)/\eps$. However, note that from the construction in \cite[Example 10.6]{O:92} one could instead obtain  an example of a moderate net  $(\psi_\eps)$ of smooth functions on $\R$ satisfying $\psi_\eps \to \delta$ and $\psi_\eps^2 \to \delta$ in $\S'(\R)$ as $\eps \to 0$. 
\end{trivlist}
\end{remark}

Let $u_\eps$ denote the unique $L^2$-solution to a typical instance---or model rather, since here $\rho_\eps$ is no longer required to be smooth---of a regularization of the Cauchy problem (\ref{SCPDE}-\ref{SCIC}) with  initial value $\sqrt{\mu \ast \rho_\eps}$, right-hand side $f_\eps = 0$, constant coefficients $c_k = 1$ ($k=1,\ldots,n$), and potential $V_\eps = 0$, that is
$$
    \d_t u_\eps = i \Delta u_\eps, 
      \quad u_\eps|_{t=0} = \sqrt{\mu \ast \rho_\eps}.
$$
The solution is given by application of the strongly continuous unitary group $U_t := \exp(i t \Delta)$ ($t \in \R$) of operators on $L^2(\R^n)$, with self-adjoint generator $\Delta$ on the domain $H^2(\R^n)$, in the form $u_\eps(t,x) = (U_t \sqrt{\mu \ast \rho_\eps})(x)$.  Here and in the sequel, we will repeatedly apply the Fourier transform and thereby follow H\"ormander's convention \cite[Chapter 7]{Hoermander:V1}. Applying the Fourier transform $\F$ on $L^2(\R^n)$, we have 
\beq\label{solFT}
     \F u_\eps(\xi,t) = \exp(- i t |\xi|^2) \, \F(\sqrt{\mu \ast \rho_\eps})(\xi),
\eeq 
or, in terms of a spatial convolution (cf. \cite[Sections 3.3, 3.4, 4.2, and 4.4]{Rauch:91}), 
\beq\label{solConv}
     u_\eps(.,t) = K(t) * \sqrt{\mu \ast \rho_\eps}, \quad \text{where } 
     K(x,t) = \frac{e^{- \frac{|x|^2}{4 i t}}}{(4 \pi i t)^{n/2}}.
\eeq

For $t \in \R$ let $\mu^t_\eps$ denote the positive measure on $\R^n$ given by the Lebesgue measure with density function $|u_\eps(t,.)|^2$. Unitarity of $U_t$ implies
\begin{multline*}
  \mu^t_\eps(\R^n) = \int_{\R^n} |u_\eps(t,x)|^2\, dx =
   \int_{\R^n} (U_t \sqrt{\mu \ast \rho_\eps})(x) \cdot
     \ovl{(U_t \sqrt{\mu \ast \rho_\eps})(x)}\, dx \\ 
     =  \int_{\R^n} |\sqrt{\mu \ast \rho_\eps(x)}|^2\, dx
     = \int_{\R^n} \mu \ast \rho_\eps(x)\, dx  =
     \int_{\R^n} \int_{\R^n} \rho_\eps(x - y) \, d\mu(y) \, dx\\ =
     \int_{\R^n} \int_{\R^n} \rho_\eps(x - y) \, dx \, d\mu(y) =
     \int_{\R^n} 1 \, d\mu(y) = 1, 
\end{multline*}
hence $\{\mu^t_\eps : t \in \R, \eps \in \, ]0,1]\}$ is a family probability measures on $\R^n$, with $\mu^0_\eps$ having density $\mu \ast \rho_\eps$, and $\Norm{\mu^t_\eps} = 1$ ($t \in \R$, $\eps \in \, ]0,1]$) holds in the Banach space of finite complex Borel measures $M(\R^n)$.  

\subsection{Initial probability delta}

Recall from \eqref{solConv} that we obtain in this case $u_\eps(.,t) = K(t) * \sqrt{\rho_\eps}$.
We observe that for any $t \neq 0$, the net $(u_\eps(t,.))_{\eps \in \, ]0,1]}$ of bounded functions on $\R^n$  converges to $0$ uniformly, since $\sqrt{\rho_\eps} \in L^1(\R^n)$ and the $L^1$-$L^\infty$-estimate for the Schr\"odinger propagator (\cite[\pg 4.4, Theorem 1]{Rauch:91}) implies 
\beq\label{LoneLinf}
  \norm{u_\eps(t,.)}{L^\infty} \leq \frac{\norm{\sqrt{\rho_\eps}}{L^1}}{(4 \pi |t|)^{n/2}} 
    = \frac{\norm{\sqrt{\rho}}{L^1}}{(4 \pi |t|)^{n/2}} \,\eps^{n/2} \to 0  
    \quad (\eps \to 0).
\eeq
Therefore, $\mu^t_\eps \to 0$ as $\eps \to 0$ in $\S'(\R^n)$ and also with respect to the vague topology on $M(\R^n)$, i.e., pointwise as linear functionals on $C_c(\R^n)$  (cf.\ \cite[\pg 30]{Bauer:01}). 
Since $\Norm{\mu^t_\eps} =  1$ for every $\eps \in\, ]0,1[$, the family of linear functionals $H := \{ \mu^t_\eps \mid \eps \in\, ]0,1[ \}$ is equicontinuous (\cite[Exercise 32.5, page 342]{Treves:06}). By density of $C_c(\R^n)$ in $C_0(\R^n)$,  the weak* topology, i.e.,  $\sig(M(\R^n),C_0(\R^n))$, coincides with $\sig(M(\R^n), C_c(\R^n))$ on the equicontinuous set $H$ (\cite[Proposition 32.5, page 340]{Treves:06}), which implies that $\lim_{\eps \to 0}\dis{\mu^t_\eps}{\psi} = 0$ holds for every $\psi \in C_0(\R^n)$ (alternatively, this can be shown directly by splitting the integrals into two parts, one part over the complement of a compact set, where $\sup \psi$ is arbitrarily small, the remaining part on the compact set is estimated using \eqref{LoneLinf}). However, $(\mu^t_\eps)_{\eps \in \, ]0,1]}$ can certainly not be weakly convergent\footnote{sometimes called Bernoulli convergent} in the sense of probability theory, i.e., pointwise as functionals on $C_b(\R^n)$, since the weak limit would  have to be equal to the vague limit, which is $0$, but $\dis{\mu^t_\eps}{1} = \mu^t_\eps(\R^n) = 1 \not\to 0$ as $\eps \to 0$ (see also  \cite[Theorem 30.8]{Bauer:01}). 

To summarize, an initial value regularization with $\mu = \de = \mu_\eps^0$ satisfying (M)  implies that for every $t \neq 0$,
\begin{multline}\tag{W}
    \mu^t_\eps \to 0 \text{ as $\eps \to 0$  in $\S'(\R^n)$, vaguely, and even weak* in } M(\R^n) \isom C_0(\R^n)',\\ \text{but $(\mu^t_\eps)$ does not converge weakly (in the sense of probability theory) in $M(\R)$}.
\end{multline}

\subsubsection{Case study in one spatial dimension by means of elementary analysis} 

The following one-dimensional example illustrates  the failure of weak convergence in a drastic way, but at the same time it leads to the intuition that ``test functions'' on $\R$ possessing limits at $x=\pm \infty$ or integral averages might restore the convergence.
\begin{example}\label{ex} Let $f \in C_b(\R)$ be given by $f(x) = e^{i \log(1 + |x|)}$ ($x \in \R$). If we use the Gaussian mollifier $\rho(x) = {\exp(-x^2/2)}/{\sqrt{2 \pi}}$ in the regularization, then, for any $t \neq 0$, the net $(\dis{\mu_\eps^t}{f})_{0 < \eps \leq 1}$ of complex numbers has uncountably many cluster points in $\C$: Applying an appropriately scaled version of \cite[Section 3.3, Example 1]{Rauch:91} to accommodate for the square root initial value in our Cauchy problem, a routine calculation yields the explicit expression
$$
  |u_\eps(x,t)|^2 = c_\eps(t) \rho(c_\eps(t) x), 
   \text{ where } c_\eps(t) = \frac{\eps}{\sqrt{t^2 + \eps^4}} \to 0 \; (\eps \to 0),
$$
hence, by symmetry of $f$ and $\rho$ and a simple change of variables,
\begin{multline*}
  \dis{\mu^t_\eps}{f} = 
  2  c_\eps(t) \int_0^\infty e^{i \log(1 + x)} \rho(c_\eps(t) x) \, dx =
  2  \int_0^\infty e^{i \log(1 + \frac{y}{c_\eps(t)})} \rho(y) \, dy \\ =
  2  e^{- i \log c_\eps(t)} \int_0^\infty e^{i \log(c_\eps(t) + y)} \rho(y) \, dy,
\end{multline*}
where the last integral converges to $ \gamma := \int_0^\infty e^{i \log y} \rho(y) \, dy = \Gamma(\frac{1+i}{2})/(2 \sqrt{2 \pi}) \neq 0$ as $\eps \to 0$ by dominated convergence; let $\al \in [0, 2\pi[$ and choose a positive real null sequence $(\eps_n)_{n \in \N}$ such that $c_{\eps_n}(t) = \exp(-\al - 2 \pi n)$ (which is in accordance with $c_\eps \to 0$) to obtain the following cluster point
$$
    \lim_{n \to \infty} \dis{\mu^t_\eps}{f} = 
    \lim_{n \to \infty} 2  e^{i (\al + 2 \pi n) } \int_0^\infty e^{i \log(c_{\eps_n}(t) + y)} \rho(y) \, dy =  2 \gamma e^{i \al}.
$$
\end{example}

\subsubsection*{Convergence on bounded functions possessing limits at $\pm \infty$}

We suppose that $t \neq 0$ and an initial value regularization with $\mu = \de$ satisfying (M). One might suspect from the construction of cluster points in Example \ref{ex}, that a limit of $\dis{\mu^t_\eps}{f}$ exists as $\eps \to 0$, if the function $f$ possesses limits as $x \to \pm \infty$. 

\begin{proposition}\label{PropL}
 If $f \in L^\infty(\R)$ is such that both $L_\pm (f) := \lim\limits_{x \to \pm \infty} f(x)$ exist, then 
$$
   \lim_{\eps \to 0}\, \dis{\mu^t_\eps}{f} = \frac{L_-(f) + L_+(f)}{2}.
$$
\end{proposition}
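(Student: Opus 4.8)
The plan is to peel off the two constant ``ends'' of $f$ and reduce the whole statement to the single symmetry fact that, in the limit $\eps\to 0$, the unit mass of $\mu^t_\eps$ splits into exactly one half on each half-line. Since $L_\pm(f)$ exist, I would first write
$$ f = \frac{L_-(f)+L_+(f)}{2} + \frac{L_+(f)-L_-(f)}{2}\,\sgn + r, $$
where $r\in L^\infty(\R)$ satisfies $r(x)\to 0$ as $x\to\pm\infty$. Because $\dis{\mu^t_\eps}{1}=\mu^t_\eps(\R)=1$ for every $\eps$, and because $\dis{\mu^t_\eps}{r}\to 0$ by exactly the split-integral estimate already used for (W) (the bound $\||u_\eps(t,\cdot)|^2\|_{L^\infty}=O(\eps)$ from \eqref{LoneLinf} controls the integral over any fixed compact set, while $\sup_{|x|>R}|r|$ controls the tail uniformly in $\eps$), the statement reduces to proving $\dis{\mu^t_\eps}{\sgn}\to 0$, i.e.\ $\int_0^\infty |u_\eps(t,x)|^2\,dx\to 1/2$.

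For the half-mass I would complete the square in the convolution \eqref{solConv}. Writing $K(x,t)=e^{i|x|^2/(4t)}/(4\pi i t)^{1/2}$ and expanding $|x-y|^2$ gives $u_\eps(x,t)=\tfrac{e^{i|x|^2/(4t)}}{(4\pi i t)^{1/2}}\,G_\eps(x)$ with $G_\eps(x)=\int e^{-ixy/(2t)}e^{iy^2/(4t)}\sqrt{\rho_\eps(y)}\,dy$, so that $|u_\eps(x,t)|^2=\tfrac{1}{4\pi|t|}|G_\eps(x)|^2$. Rescaling $y=\eps w$ turns this into $G_\eps(x)=\eps^{1/2}\Phi_\eps(\eps x/(2t))$, where
$$ \Phi_\eps(\xi)=\int e^{-i\xi w}\,e^{i\eps^2 w^2/(4t)}\,\sqrt{\rho(w)}\,dw=\FT{g_\eps}(\xi),\qquad g_\eps(w):=e^{i\eps^2 w^2/(4t)}\sqrt{\rho(w)}. $$
The change of variables $\xi=\eps x/(2t)$ then collapses the Jacobian against the prefactor and yields $\int_0^\infty|u_\eps(t,x)|^2\,dx=\tfrac{1}{2\pi}\int_I|\Phi_\eps(\xi)|^2\,d\xi$, where $I$ is one of the two half-lines (which one depends only on the sign of $t$).

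The final step is to pass to the limit and invoke evenness. I would show $\Phi_\eps\to\FT{g}$ in $L^2(\R)$ with $g=\sqrt\rho$ by Plancherel: $\|\Phi_\eps-\FT{g}\|_{L^2}^2=2\pi\|g_\eps-g\|_{L^2}^2=2\pi\int|e^{i\eps^2 w^2/(4t)}-1|^2\rho(w)\,dw\to 0$ by dominated convergence, the dominating function being $4\rho\in L^1(\R)$. Since multiplication by $\mathbf 1_I$ is continuous on $L^2$, this gives $\int_I|\Phi_\eps|^2\to\int_I|\FT{g}|^2$. Now comes the point that makes the asymmetry of a general $\rho$ irrelevant: $g=\sqrt\rho$ is real, hence $\FT{g}(-\xi)=\overline{\FT{g}(\xi)}$ and $|\FT{g}|^2$ is even; together with $\tfrac{1}{2\pi}\int_\R|\FT{g}|^2=\int\rho=1$ this forces $\tfrac{1}{2\pi}\int_I|\FT{g}|^2=1/2$ for either half-line. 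Therefore $\int_0^\infty|u_\eps|^2\to 1/2$, so $\dis{\mu^t_\eps}{\sgn}\to 0$, and combining with the reduction completes the proof.

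The main obstacle is the last step: the naive uniform bound $|\Phi_\eps(\xi)|\le\|\sqrt\rho\|_{L^1}$ is not integrable over a half-line, so one cannot pass $\eps\to 0$ inside $\int_I|\Phi_\eps|^2$ by a crude dominated-convergence argument in $\xi$; routing the convergence through Plancherel, where the domination takes place in the physical variable $w$ (in which $\rho\in L^1$ does supply an integrable majorant), is what makes it work. The accompanying conceptual subtlety---and the reason the symmetric mean $\tfrac{L_-(f)+L_+(f)}{2}$ survives even for non-even mollifiers---is that the limiting mass-splitting is governed by $|\FT{\sqrt\rho}|^2$, which is even purely because $\sqrt\rho$ is real-valued.
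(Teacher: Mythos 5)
Your proof is correct, and while its analytic core coincides with the paper's, it is organized around a genuinely different reduction. The paper splits the \emph{domain} into $(-\infty,-1)\cup[-1,1]\cup(1,\infty)$, carries a general bounded $f$ through the rescaled oscillatory integral, and proves $\int_1^\infty |u_\eps|^2 f \to L_+(f)/2$ directly by a three-term estimate in which the factor $f(r/\eps)\to L_+(f)$ is handled by dominated convergence against $|h|^2\in L^1$; the value $1/2$ then comes from computing $\int_0^\infty |h|^2\,dr$ via evenness and Parseval. You instead split the \emph{test function}, $f = \tfrac{L_-+L_+}{2} + \tfrac{L_+-L_-}{2}\sgn + r$ with $r\to 0$ at $\pm\infty$, dispose of the constant by exact mass conservation $\dis{\mu^t_\eps}{1}=1$, dispose of $r$ by the soft vague-convergence-plus-tightness estimate already implicit in (W), and thereby confine all hard analysis to the single statement $\int_0^\infty|u_\eps(t,x)|^2\,dx\to 1/2$. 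That half-mass computation is then exactly the paper's computation specialized to an indicator: the same completion of the square in \eqref{solConv}, the same rescaling, the same $L^2$-convergence $\Phi_\eps\to\FT{\sqrt\rho}$ (your Plancherel-plus-dominated-convergence argument makes explicit what the paper only asserts), and the same use of realness of $\sqrt\rho$ to get evenness of $|\FT{\sqrt\rho}|^2$ and hence the factor $1/2$. What your route buys is modularity and a cleaner conceptual statement — the limit is ``mean of the two endpoint values'' precisely because the mass splits evenly between the two half-lines, with everything else being soft — at the cost of needing the separate (easy) argument for the remainder $r$; the paper's route is more monolithic but treats general $f$ in one pass and, as written, generalizes more directly to the almost-periodic case of Proposition \ref{PropAP}, where no such finite-dimensional decomposition of the test function is available.
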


\begin{proof} Let $f$ be as in the hypothesis. We write
$$
     \dis{\mu^t_\eps}{f} =  \int_{-\infty}^{-1} |u_\eps(x,t)|^2 f(x) \, dx 
     + \int_{-1}^{1} |u_\eps(x,t)|^2 f(x) \, dx  + \int_{1}^{\infty} |u_\eps(x,t)|^2 f(x) \, dx
     =: a_\eps + b_\eps + c_\eps
$$
and note that \eqref{LoneLinf} implies $b_\eps \to 0$ as $\eps \to 0$. We will show that $\lim_{\eps \to 0} c_\eps = L_+(f)/2$. The arguments to show $\lim_{\eps \to 0} a_\eps = L_-(f)/2$ are completely analogous, thus the proof will be complete.

Applying \eqref{solConv} in the special case $n = 1$ and upon a simple change of variables, we have
$$
    |u_\eps(x,t)|^2 = \frac{\eps}{4 \pi |t|} \left| 
       \int_\R e^{-i \frac{\eps x z}{2t}} e^{i \frac{\eps^2 z^2}{4 t}} \sqrt{\rho(z)} \, dz
    \right|^2,
$$
which, upon another change of variables in the outermost integral, gives
$$
   c_\eps = \frac{1}{4 \pi |t|} \int_\eps^\infty
   \left| \int_\R e^{-i \frac{r z}{2t}} 
        e^{i \frac{\eps^2 z^2}{4 t}} \sqrt{\rho(z)} \, dz
    \right|^2 f(\frac{r}{\eps}) \, d r =: \frac{1}{4 \pi |t|} \int_\eps^\infty
   \left| h_\eps(r)  \right|^2 f(\frac{r}{\eps}) \, d r.
$$
We observe that $f(r/\eps) \to L_+(f)$ pointwise as $\eps \to 0$ and that a change of variables yields
$$ 
    h_\eps(r) = 2 |t| \int_\R e^{-i r y} 
        e^{i t \eps^2 y^2} \sqrt{\rho(2 t y)} \, dz =
         2 |t| \, \F_{y \to r}(  e^{i t \eps^2 y^2} \sqrt{\rho(2 t y)}) (r),
$$
which converges in $L^2(\R)$ to $h(r) := 2 |t| \, \F(\sqrt{\rho(2 t .)})  (r)$. We estimate
\begin{multline*}
  4 \pi |t| \left| c_\eps - \frac{1}{4 \pi |t|} \int_0^\infty |h(r)|^2 \, dr\, L_+(f) \right| \\ \leq 
     \left| \int_0^\eps |h_\eps(r)|^2 f(\frac{r}{\eps}) \, dr \right| +
       \left| \int_0^\infty \left( |h_\eps(r)|^2 f(\frac{r}{\eps}) 
           - |h(r)|^2 L_+(f) \right) \, dr
       \right|   \\ \leq
         \eps \norm{f}{\infty} \norm{h_\eps}{\infty}^2 + 
           \int_0^\infty \left( \left| |h_\eps(r)|^2 - |h(r)|^2 \right| 
               |f(\frac{r}{\eps})| \right) \, dr 
          +  \int_0^\infty \left( |h(r)|^2 \left| f(\frac{r}{\eps}) -  L_+(f) \right| \right) \, dr\\
    \leq 
      \eps \norm{f}{\infty} \norm{\sqrt{\rho}}{1}^2 + 
      \norm{f}{\infty} \left| \norm{h_\eps}{2}^2 - \norm{h}{2}^2 \right| + 
      \int_0^\infty \left( |h(r)|^2 \left| f(\frac{r}{\eps}) -  L_+(f) \right| \right) \, dr
\end{multline*}
and observe that all terms in the final upper bound tend to $0$ as $\eps \to 0$: This is obvious for the first term, is implied by $L^2$ convergence $h_\eps \to h$ in the second term, and follows from dominated convergence in the third term. Therefore,
$$
   \lim_{\eps \to 0} c_\eps = \frac{1}{4 \pi |t|} \int_0^\infty |h(r)|^2 \, dr\, L_+(f)
$$
and it remains to observe that condition (M) and the fact $h(-x) =  \ovl{h(x)}$ (since $\sqrt{\rho}$ is real)  imply 
$$
  \int_0^\infty |h(r)|^2 \, dr = \frac{1}{2} \norm{h}{2}^2 = 
  \frac{2 |t|}{2} \norm{\F (\sqrt{\rho})}{2}^2 = |t| 2 \pi \norm{\sqrt{\rho}}{2}^2 =
  2 \pi |t| \norm{\rho}{1}^2 = 2 \pi |t|.
$$
\end{proof}

The above result allows for an interpretation in terms of a limit measure concentrated at infinity:  Note that $C_\pm(\R) := \{ f \in C_b(\R) \mid \exists L_-(f) \text{ and } \exists L_+(f)\}$ is isometrically isomorphic to $C([-\infty,\infty])$, where $[-\infty,\infty]$ is the two-point compactification of $\R$; we obtain $C_\pm(\R) ' \isom M([-\infty,\infty])$ by the Riesz representation theorem and hence Proposition \ref{PropL} implies the following statement (with the slight abuse of notation considering $\mu^t_\eps$ as elements in the dual of $C([-\infty,\infty])$).

\begin{corollary} The net $(\mu^t_\eps)_{\eps \in \, ]0,1]}$ has the weak* limit
$\ds{\frac{1}{2}(\delta_{-\infty} + \delta_\infty)}$ in  $M([-\infty,\infty])$.
\end{corollary}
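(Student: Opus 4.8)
The plan is to obtain the corollary as an immediate consequence of Proposition \ref{PropL} once the identification of each $\mu^t_\eps$ with a functional on $C([-\infty,\infty])$ has been made precise. First I would fix the meaning of the asserted limit: the weak* topology on $M([-\infty,\infty]) \isom C([-\infty,\infty])'$ is $\sigma(M([-\infty,\infty]), C([-\infty,\infty]))$, so it suffices to verify that $\dis{\mu^t_\eps}{g} \to \dis{\frac{1}{2}(\de_{-\infty}+\de_\infty)}{g}$ as $\eps \to 0$ for every fixed $g \in C([-\infty,\infty])$.

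Next I would pin down the abuse of notation flagged before the statement. Let $\Phi \col C([-\infty,\infty]) \to C_\pm(\R)$, $g \mapsto g|_\R$, be the isometric isomorphism, whose inverse extends $f \in C_\pm(\R)$ to the two-point compactification by $f(\pm\infty) := L_\pm(f)$. Each $\mu^t_\eps$, a genuine probability measure on $\R$ with density $|u_\eps(\cdot,t)|^2$, is viewed as a Borel measure on $[-\infty,\infty]$ carrying no mass at the two endpoints, so that the relevant pairing reads $\dis{\mu^t_\eps}{g} = \int_\R (g|_\R)(x)\, |u_\eps(x,t)|^2\, dx$. This is the only point that genuinely has to be checked to make the claim well-posed; once it is granted, the rest is formal.

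Then I would simply invoke Proposition \ref{PropL}. Setting $f := g|_\R \in C_\pm(\R) \subseteq L^\infty(\R)$, we have $L_\pm(f) = g(\pm\infty)$, and the proposition yields
$$
   \lim_{\eps \to 0} \dis{\mu^t_\eps}{g} = \lim_{\eps \to 0} \dis{\mu^t_\eps}{f} = \frac{L_-(f) + L_+(f)}{2} = \frac{g(-\infty) + g(\infty)}{2}.
$$
Recognizing the right-hand side as $\frac{1}{2}\big(\dis{\de_{-\infty}}{g} + \dis{\de_\infty}{g}\big) = \dis{\frac{1}{2}(\de_{-\infty}+\de_\infty)}{g}$, since each point mass evaluates $g$ at the respective boundary point, I conclude the asserted weak* convergence to $\frac{1}{2}(\de_{-\infty}+\de_\infty)$.

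The only real obstacle, and it is minor, lies in the second step: confirming that integrating a continuous function on $[-\infty,\infty]$ against the extension of $\mu^t_\eps$ coincides with integrating its restriction against $\mu^t_\eps$ on $\R$, i.e.\ that no mass escapes to the endpoints under the identification. Since every $\mu^t_\eps$ is genuinely carried by $\R$, this is immediate; the mass that does accumulate at $\pm\infty$ surfaces only in the limit, recorded precisely by the boundary values $L_\pm(f)$ in Proposition \ref{PropL}.
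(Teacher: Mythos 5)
Your proposal is correct and follows exactly the paper's route: the paper obtains the corollary directly from Proposition \ref{PropL} via the isometric isomorphism $C_\pm(\R) \isom C([-\infty,\infty])$ and the Riesz representation $C_\pm(\R)' \isom M([-\infty,\infty])$, treating $\mu^t_\eps$ as functionals on $C([-\infty,\infty])$ by the same abuse of notation you spell out. Your write-up merely makes explicit the identification $f = g|_\R$, $L_\pm(f) = g(\pm\infty)$ that the paper leaves implicit, which is a fine (and slightly more careful) presentation of the identical argument.
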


\subsubsection*{Convergence on almost periodic functions}

In the sequel, we still assume that $t \neq 0$ and that the initial value regularization with $\mu = \de$ has the property (M). In trying to find a subclass of functions in $f \in C_b(\R)$, which is substantially different from $C_\pm(\R)$, but allows for the existence of a limit of $\dis{\mu^t_\eps}{f}$ (as $\eps \to 0$), periodic functions come to mind, since an averaging effect in the integrals might produce convergence. 

\begin{example}[Convergence on trigonometric polynomials] \label{ConTrigPol}  Recall that $\dis{\mu^t_\eps}{1} = \mu^t_\eps(\R^n) = 1$. If $f \in C_b(\R) \setminus C_\pm(\R)$ is given by $f(x) = e^{i x \xi}$ with $\xi \in \R \setminus \{0\}$,  then we claim that $\lim_{\eps \to 0}\, \dis{\mu^t_\eps}{f} = 0$. Indeed, using
Equation \eqref{solFT} and that $\sqrt{\rho}$ is real-valued,  we obtain (with the notation $Rg(x) = g(-x)$)
\begin{multline*}
  \dis{\mu^t_\eps}{f} = \F(\mu^t_\eps)(-\xi) = 
    \F(u_\eps(.,t) \, \ovl{u_\eps(.,t)})(-\xi) =  
    \frac{1}{2 \pi} \F(u_\eps(.,t)) \ast \F(\ovl{u_\eps(.,t)}) (-\xi) \\ =
             \frac{1}{2 \pi}\,  (e^{- i t |.|^2}  \F(\sqrt{\rho_\eps})) \ast
                         (e^{i t |.|^2}  R \ovl{\F(\sqrt{\rho_\eps}}))(-\xi)\\ =
              \frac{1}{2 \pi} \int_\R e^{- i t y^2 + i t (-\xi - y)^2} 
              \, \F(\sqrt{\rho_\eps})(y) \, \ovl{\F(\sqrt{\rho_\eps})(\xi + y)} \, dy\\
              =  \frac{1}{2 \pi}\,  \int_\R e^{i t \xi^2 + 2 i t \xi y} 
              \, \F(\sqrt{\rho_\eps})(y) \, \F(\sqrt{\rho_\eps})(- y - \xi) \, dz =
            e^{i t \xi^2}\, \F^{-1}\big(\F(\sqrt{\rho_\eps}) \,
               \F(e^{i \xi .} R \sqrt{\rho_\eps})\big)(2 t \xi)\\
           =  e^{i t \xi^2} \sqrt{\rho_\eps} \ast (e^{i \xi .} R \sqrt{\rho_\eps})(2 t \xi) =
       \frac{e^{i t \xi^2}}{\eps}  \sqrt{\rho(\frac{.}{\eps})} 
            \ast (e^{i \xi .} R \sqrt{\rho(\frac{.}{\eps})})(2 t \xi).
\end{multline*}
Therefore, 
$$
   |\dis{\mu^t_\eps}{f}| \leq \frac{1}{\eps} |\sqrt{\rho(\frac{.}{\eps})}| 
            \ast |R \sqrt{\rho(\frac{.}{\eps})}|(2 t \xi) = \sqrt{\rho}
            \ast R\sqrt{\rho}\left(\frac{2 t \xi}{\eps}\right) \to 0 \quad (\eps \to 0),
$$
since $L^2(\R) \ast L^2(\R) \subset C_0(\R)$ (\cite[14.10.7]{Dieudonne:V2E}).

We conclude that $\dis{\mu^t_\eps}{f}$ converges, if $f$ is a trigonometric polynomial, i.e., $f(x) = \sum_{j=0}^m a_j e^{i x \xi_j}$ with $a_j \in \C$ and $\xi_j \in \R$ ($j = 0, \ldots, m$). Suppose $\xi_0 = 0$ and $\xi_k \neq 0$, if $k \neq 0$, then we have
$$
    \lim_{\eps \to 0} \dis{\mu^t_\eps}{f} = a_0 = \lim_{R \to \infty} \frac{1}{2R} \int_{-R}^R f(x)\, dx,
$$
since $1 \leq k \leq m$ yields $\int_{-R}^R e^{i x \xi_k} \, dx / (2 R)= (e^{i R \xi _k} - e^{-i R \xi_k})/(2 i \xi_k R) \to 0$ as $R \to \infty$.

\end{example}

Motivated by the above example, we consider the $\norm{.}{\infty}$-closure of the subspace of trigonometric polynomials in $C_b(\R)$, which is the space $AP(\R)$ of \emph{almost periodic functions} on $\R$ (cf. \cite[Chapter VI, Theorems 5.7 and 5.17]{Katznelson:04}). We collect a few basic properties of $AP(\R)$:

\begin{trivlist}

\item{(i)} The subspace $AP(\R)$  is, in fact, a (closed Abelian) unital $C^*$ subalgebra of $C_b(\R)$. This follows easily from \cite[Chapter VI, Theorem 5.7]{Katznelson:04} and the fact that $C_b(\R)$ is an Abelian unital $C^*$ algebra.

\item{(ii)} If $f \in AP(\R)$, then  the \emph{mean} 
\beq\label{mean}
   \ds{m(f) := \lim_{R \to \infty} \frac{1}{2 R} \int_{-R}^R f(x) \, dx}
\eeq 
exists and may be computed in the form $m(f) = \lim_{\eta \to 0} F_\eta \ast f$, where $F_\eta (x) := \eta F(\eta x)$, with any $F \in L^1(\R)$ such that $F \geq 0$ and $\int_\R F(x) \, dx = 1$  (cf.\ \cite[Chapter VI, Subsections 5.10 and 5.11]{Katznelson:04}). In particular, if $F$ is the characteristic function of the unit interval $[0,1]$, we obtain 
\beq\label{mean0}
  \forall f \in AP(\R) \col \quad 
    m(f) = \lim_{R \to \infty} \frac{1}{R} \int_0^R f(x)\, dx
\eeq
(which is not true for any $f \in C_b(\R)$ such that the mean $m(f)$ according to \eqref{mean} exists).

\item{(iii)} If $f \in C_b(\R)$ is the function considered in Example \ref{ex}, recall $f(x) = e^{i \log(1 + |x|)}$, then clearly $f \not\in C_\pm(\R)$. Moreover, $f$ is not an almost periodic function on $\R$ (as noted in \cite[22.17, Problems 8b) and 12b)]{Dieudonne:V6E}), because the mean of $f$ does not exist: Direct calculation, using the symmetry of $f$ and the change of variables $1+x = e^s$, gives that $(1+i) \int_{-R}^R f(x)\, dx / (2 R) = \exp(i \log(1+R)) + \frac{\exp(i \log(1+R)) - 1}{R}$, where the second term tends to $0$ (as $R \to \infty$), whereas the first term does not converge. Therefore, we have 
$$
   C_\pm(\R) \cup AP(\R) \subsetneq C_b(\R).
$$

\item{(iv)} It is not difficult to see that, as subspaces of $C_b(\R)$, we have
$$
     C_\pm(\R) \cap AP(\R) = \text{span}\, \{ 1\}, 
$$     
because \cite[Chapter VI, Lemma 5.3]{Katznelson:04} states that for a given almost periodic function $f$ and $\eps > 0$ arbitrary, there is a number $\la > 0$ such that the image $f(\R)$ is contained in the $\eps$-neighborhood of $f(I)$ for any interval $I \subseteq$ of length $\la$; if $f \in C_\pm(\R)$ in addition, choosing the interval $I$ far out to the right shows that the function values of $f(x)$ ($x \in \R$) vary at most by $\eps$ from the limit $L_+(f)$.

\end{trivlist}

\begin{remark}
 If $f \in L^\infty(\R)$ is as in Proposition \ref{PropL}, i.e., the limits $L_\pm(f)$ at $\pm \infty$ exist, then the mean $m(f)$ exists and $m(f) = (L_-(f) + L_+(f))/2$ holds. This is easily seen as follows: Let $\eps > 0$; first note that $L_-(f) = L_+(\check{f})$, if $\check{f}(x) := f(-x)$; thus, we  consider without loss of generality only 
$$
      \frac{1}{R} \int_0^R f(x) dx - L_+(f) 
      = \frac{1}{R} \int_0^r (f(x) - L_+(f)) dx + \frac{1}{R} \int_r^R (f(x) - L_+(f)) dx,
$$  
and choose $0 < r < R$ such that $|f(x) - L_+(f)| \leq \eps/2$, if $x \geq r$, and $r (\norm{f}{\infty} + |L_+(f)|) / R \leq \eps/2$.  
This observation connects Proposition \ref{PropAP} below with the limit formula given in Proposition \ref{PropL}, but note that the proof of the latter required no extra condition on the mollifier $\rho$ and the former is not conclusive for functions in $C_\pm(\R) \not\subseteq AP(\R)$.
\end{remark}

\begin{proposition}\label{PropAP} Suppose, in addition to (M), that 
\beq\tag{MM}
   x \mapsto (1 + x) \sqrt{\rho(x)}  \text{ and }
    x \mapsto \diff{x} \sqrt{\rho(x)} \text{ belong to }  L^1(\R) \cap L^2(\R),
\eeq
then  
$$
    \forall f \in AP(\R): \quad \lim_{\eps \to 0}\, \dis{\mu^t_\eps}{f} = m(f).
$$
\end{proposition}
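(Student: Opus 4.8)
The plan is to reduce to trigonometric polynomials---where convergence is already established in Example \ref{ConTrigPol}---and then extend to all of $AP(\R)$ by density together with a uniform bound. Two facts drive this. First, each $\mu^t_\eps$ is a probability measure on $\R$, so $f \mapsto \dis{\mu^t_\eps}{f}$ is a linear functional on $C_b(\R)$ with $|\dis{\mu^t_\eps}{f}| \leq \norm{f}{\infty}\,\mu^t_\eps(\R) = \norm{f}{\infty}$, \emph{uniformly} in $\eps$. Second, the mean $m$ is linear on $AP(\R)$ and satisfies $|m(f)| \leq \norm{f}{\infty}$, as is immediate from the averages in \eqref{mean}. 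Since $AP(\R)$ is by definition the $\norm{\cdot}{\infty}$-closure of the trigonometric polynomials, a uniformly bounded net of functionals that converges on this dense subspace must converge everywhere.

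Concretely, fix $f \in AP(\R)$ and $\de > 0$, and choose a trigonometric polynomial $p$ with $\norm{f - p}{\infty} < \de$. Writing $\dis{\mu^t_\eps}{f} - m(f) = \dis{\mu^t_\eps}{f - p} + \big(\dis{\mu^t_\eps}{p} - m(p)\big) + \big(m(p) - m(f)\big)$, I would bound the first term by $\norm{f-p}{\infty} < \de$ (total mass $1$) and the third by $|m(p-f)| \leq \norm{f-p}{\infty} < \de$, while the middle term tends to $0$ as $\eps \to 0$ directly by Example \ref{ConTrigPol}, which gives $\dis{\mu^t_\eps}{e^{i x \xi}} \to 0$ for $\xi \neq 0$ and $\dis{\mu^t_\eps}{1} = 1$, i.e. precisely $\dis{\mu^t_\eps}{p} \to m(p)$. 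Hence $\limsup_{\eps \to 0} |\dis{\mu^t_\eps}{f} - m(f)| \leq 2\de$, and letting $\de \to 0$ finishes the argument.

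An alternative, more explicit route, in the style of the computation in Proposition \ref{PropL} and closer to where hypothesis (MM) would be invoked, would start from the exact identity $|u_\eps(x,t)|^2 = \frac{\eps}{4\pi|t|}\,|A_\eps(\eps x/(2t))|^2$ with $A_\eps(\ze) = \int_\R e^{-i\ze z}\,e^{i\eps^2 z^2/(4t)}\sqrt{\rho(z)}\,dz$. Substituting $\ze = \eps x/(2t)$ yields $\dis{\mu^t_\eps}{f} = \frac{1}{2\pi}\int_\R |A_\eps(\ze)|^2 f(2t\ze/\eps)\,d\ze$. Replacing $A_\eps$ by its limit $A_0 := \F(\sqrt{\rho})$ produces the fixed \emph{even} probability density $P(\ze) := |A_0(\ze)|^2/(2\pi)$, which integrates to $1$ by Plancherel and (M); a change of variables (using that $P$ is even) identifies $\frac{1}{2\pi}\int_\R |A_0(\ze)|^2 f(2t\ze/\eps)\,d\ze$ with $(P_\eta * f)(0)$ for $P_\eta(x) = \eta P(\eta x)$ and $\eta = \eps/(2|t|)$, and this converges to $m(f)$ by the mean-value property \eqref{mean} applied to the density $P$. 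Condition (MM) is exactly what would control the remaining error $A_\eps - A_0$, i.e. the phase factor $e^{i\eps^2 z^2/(4t)} - 1$, uniformly enough to pass to the limit under the integral.

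The only delicate point in either route is the interchange of limit and integration, and I expect it to be mild. In the density route it is handled entirely by the uniform mass bound $\Norm{\mu^t_\eps} = 1$; notably, that route invokes only (M), via Example \ref{ConTrigPol}. In the direct route the error term $\frac{1}{2\pi}\int_\R (|A_\eps|^2 - |A_0|^2)\,f(2t\ze/\eps)\,d\ze$ would be estimated by $\frac{\norm{f}{\infty}}{2\pi}\,\norm{A_\eps - A_0}{2}\,(\norm{A_\eps}{2} + \norm{A_0}{2})$, and $\norm{A_\eps - A_0}{2} \to 0$ follows from dominated convergence with dominating function $2\sqrt{\rho} \in L^2$. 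Since even this estimate uses only (M), the stronger hypothesis (MM) seems to exceed what the one-dimensional pointwise statement requires, and is presumably kept in view of the quantitative, uniform, or higher-dimensional forms behind Theorem \ref{mainthm}.
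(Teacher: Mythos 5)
Your first (density) route is correct, but it is genuinely different from the paper's own proof, which never uses approximation by trigonometric polynomials: the paper proves the statement by a direct computation, splitting $\dis{\mu^t_\eps}{f}$ into pieces, integrating by parts against $\int_0^x f(r)\,dr$, and controlling terms like $y\,(|h_\eps|^2)'(y)$ in $L^1$ and $L^2$ norms --- this is exactly where the hypothesis (MM) enters, and the mean emerges through the Ces\`aro-type formula \eqref{mean0}. Your argument instead combines Example \ref{ConTrigPol} (convergence on trigonometric polynomials, which needs only (M)) with the uniform bound $\Norm{\mu^t_\eps}=1$, the contractivity and linearity of $m$ on $AP(\R)$, and the definition of $AP(\R)$ as the uniform closure of the trigonometric polynomials; the resulting $3\de$-argument is complete and proves a \emph{stronger} statement, with (MM) dropped. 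The paper itself concedes precisely this point in Remark \ref{hinttoabstract}, where the stronger version is deduced from Example \ref{ConTrigPol} via the Blum--Eisenberg theorem; your density argument is an elementary substitute for that citation, and your second, more computational route (identifying the limit with $(P_\eta * f)(0)$ for the even probability density $P = |\F(\sqrt{\rho})|^2/(2\pi)$ and invoking the mean-value property \eqref{mean}) is likewise sound under (M) alone and arguably the most transparent explanation of \emph{why} the invariant mean appears. One small correction to your closing speculation: (MM) is not retained for the sake of Theorem \ref{mainthm} --- the higher-dimensional theorem is proved under (M) only --- but rather because the paper's own self-contained integration-by-parts proof of Proposition \ref{PropAP} cannot do without it; what that longer proof buys is an explicit real-analysis derivation independent of the harmonic-analysis input (Blum--Eisenberg, or your density reduction).
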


\begin{proof} As in the beginning of the proof of Proposition \ref{PropL} we start by splitting the integral according to 
$\dis{\mu^t_\eps}{f} =  \int_{-\infty}^{-1} |u_\eps|^2 f \, dx 
     + \int_{-1}^{1} |u_\eps|^2 f \, dx  + \int_{1}^{\infty} |u_\eps|^2  \, dx
     =: a_\eps' + d_\eps + a_\eps$
and recall that \eqref{LoneLinf} immediately implies $d_\eps \to 0$ as $\eps \to 0$. We will first investigate $\lim_{\eps \to 0} a_\eps$, the evaluation of $\lim_{\eps \to 0} a_\eps'$ is completely analogous.

Again similarly to the proof of Proposition \ref{PropL}, we may call on the explicit representation  $|u_\eps(x,t)|^2 = \frac{\eps}{4 \pi |t|} \left| \int_\R e^{-i \frac{\eps x z}{2t}} e^{i \frac{\eps^2 z^2}{4 t}} \sqrt{\rho(z)} \, dz  \right|^2$ to write
$$
   4 \pi |t| a_\eps = \eps \int_1^\infty |h_\eps(\eps x)|^2 f(x) \, dx, \quad \text{where }
   h_\eps(y) =   \int_\R e^{-i \frac{y z}{2t}} 
        e^{i \frac{\eps^2 z^2}{4 t}} \sqrt{\rho(z)} \, dz.   
$$
We note  that $(|h_\eps|^2)' = h_\eps' \ovl{h_\eps} + h_\eps \ovl{h_\eps}' \in L^1(\R)$, since by assumption (MM) we have $\sqrt{\rho}$ and $z \mapsto z \sqrt{\rho(z)}$ in $L^2(\R)$.
Integration by parts then gives 
$$
  4 \pi |t| a_\eps = \underbrace{\lim_{x \to \infty} \eps |h_\eps(\eps x)|^2 
       \int_0^x f(r) \, dr }_{=: b_\eps}  -
  \underbrace{\eps |h_\eps(\eps)|^2 \int_0^1 f(r) \, dr}_{=: n_\eps} - 
  \underbrace{\int_1^\infty \diff{x}\big(\eps |h_\eps(\eps x)|^2\big) 
    \int_0^x f(r) \, dr \, dx}_{=: c_\eps},
$$
where clearly $n_\eps \to 0$ ($\eps \to 0$), since $\norm{h_\eps}{\infty} \leq \norm{\sqrt{\rho}}{1}$. 

We claim that $b_\eps = 0$, which follows from
$$
    b_\eps = \lim_{x \to \infty} \eps |h_\eps(\eps x)|^2 \int_0^x f(r) \, dr =
      \lim_{x \to \infty} \eps x |h_\eps(\eps x)|^2 \, \frac{1}{x}\! \int_0^x f(r) \, dr =
      \lim_{y \to \infty} y |h_\eps(y)|^2 \cdot 
      \lim_{x \to \infty} \frac{1}{x}\! \int_0^x f(r) \, dr,
$$
where the rightmost limit equals $m(f)$ due to \eqref{mean0} and the next to last factor is $0$, since $y \mapsto (1 + y) h_\eps(y)$ is a bounded function by our hypothesis $(MM)$ on $\rho$.

It remains to investigate 
\begin{multline*}
  c_\eps = \int_1^\infty \eps^2 \big( |h_\eps|^2\big)'(\eps x) \int_0^x f(r) \, dr \, dx =
     \int_\eps^\infty \eps \big( |h_\eps|^2\big)'(y)  \int_0^{y/\eps} f(r) \, dr \, dy\\ =
     \int_\eps^\infty y \big( |h_\eps|^2\big)'(y) \,  \frac{1}{y/\eps} \int_0^{y/\eps} f(r) \, dr \, dy,
\end{multline*}
where we note that by \eqref{mean0}, the factor $\frac{1}{y/\eps} \int_0^{y/\eps} f(r) \, dr$ in the final integrand converges to $m(f)$ pointwise as $\eps \to 0$ and is bounded uniformly by $\norm{f}{\infty}$. Furthermore, $h_\eps(y)$ clearly converges pointwise to 
$$ 
   h(y) := \int_\R e^{-i \frac{y z}{2t}}  \sqrt{\rho(z)} \, dz = \F(\sqrt{\rho})(\frac{y}{2t}),
$$
but we need to show that even $y \big( |h_\eps|^2\big)'(y) \to y \big( |h|^2\big)'(y)$ in a sufficiently strong mode of convergence to prove the following ``educated guess'', which we formulate as claim
\beq\tag{C}
   \lim_{\eps \to 0} c_\eps =  m(f) \int_0^\infty y \big( |h|^2\big)'(y)  \, dy.
\eeq
We consider
\begin{multline*}
  |c_\eps - m(f) \int_0^\infty y \big( |h|^2\big)'(y)  \, dy| \leq \\
  \underbrace{\left| 
     \int_0^\infty \Big( y \big( |h_\eps|^2\big)'(y) \,  \frac{1}{y/\eps} 
     \int_0^{y/\eps} f(r) \, dr  - y \big( |h|^2\big)'(y)\, m(f) \Big)\, dy
  \right|}_{\be_\eps}  +
  \underbrace{\left| \int_0^\eps y \big( |h_\eps|^2\big)'(y) \,  \frac{1}{y/\eps} \int_0^{y/\eps} f(r) \, dr \, dy \right|}_{\al_\eps}.
\end{multline*}
Using bounds on the integrand in $\al_\eps$ due to (MM), we have 
$$
   \al_\eps \leq \eps \norm{f}{\infty} \sup_{0 \leq y \leq \eps} |y|\, |\big( |h_\eps|^2\big)'(y)| \leq 
   \eps \norm{f}{\infty} \sup_{y \in \R} |y|\, 2\,  |h_\eps'(y)| \,|h_\eps(y)| \leq
   \eps \norm{f}{\infty} 2 \, \norm{z \sqrt{\rho(z)}}{1} \norm{\sqrt{\rho}}{1}, 
$$
hence $\al_\eps \to 0$ as $\eps \to 0$.

We may insert appropriate ``mixed terms'' in the integrand of $\be_\eps$ and apply the triangle inequality to obtain
\begin{multline*}
  \be_\eps \leq 
  \int_0^\infty \Big| y \big( |h_\eps|^2\big)'(y)  -  y \big( |h|^2\big)'(y) \Big|\, \Big| \frac{1}{y/\eps} \int_0^{y/\eps} f(r) \, dr \Big|\, dy \\
  + \int_0^\infty \Big| y \big( |h|^2\big)'(y) \Big| \,  \Big| \frac{1}{y/\eps} 
     \int_0^{y/\eps} f(r) \, dr  -  m(f) \Big|\, dy =: \ga_\eps + s_\eps,
\end{multline*}
where $s_\eps \to 0$ by dominated convergence thanks to \eqref{mean0}, the bound $\Big| \frac{1}{y/\eps} \int_0^{y/\eps} f(r) \, dr  -  m(f) \Big| \leq 2 \norm{f}{\infty}$, and the fact that $y \mapsto y \big( |h|^2\big)'(y) = 2 \Re(h'(y) \cdot y h(y)) \in L^2(\R) \cdot L^2(\R) \subseteq L^1(\R)$ due to (MM); furthermore, we have
$$
   \ga_\eps \leq \norm{f}{\infty} 
   \int_0^\infty |y \big( |h_\eps|^2\big)'(y)  -  
   y \big( |h|^2\big)'(y)| \, dy, 
$$
where
\begin{multline*} 
   \int_0^\infty |y \big( |h_\eps|^2\big)'(y)  -  
   y \big( |h|^2\big)'(y)| \, dy  = 
   \int_0^\infty |h_\eps'(y)\,  \ovl{y h_\eps(y)}  -  
   h'(y)\,  \ovl{y h(y)}| \, dy\\
   \leq \int_0^\infty |h_\eps'(y)\,  \ovl{y h_\eps(y)}  -  
   h_\eps'(y)\,  \ovl{y h(y)}| \, dy +
   \int_0^\infty |h_\eps'(y)\,  \ovl{y h(y)}  -  
   h'(y)\,  \ovl{y h(y)}| \, dy\\
   \leq \int_0^\infty |h_\eps'(y)| \,  |\ovl{y h_\eps(y)}  -  
    \ovl{y h(y)}| \, dy +
   \int_0^\infty |h_\eps'(y)\,    -  
   h'(y)|\,  |\ovl{y h(y)}| \, dy \\\leq 
   \norm{h_\eps'}{2} \norm{y h_\eps(y) - y h(y)}{2} + 
   \norm{h_\eps'(y)\,    - h'(y)}{2} \norm{y h(y)}{2}\\ \leq
  \norm{z \sqrt{\rho(z)}}{2} \norm{y h_\eps(y) - y h(y)}{2} + 
   \norm{h_\eps'(y)\,    - h'(y)}{2} \norm{(\sqrt{\rho})'}{2} \to 0 \quad (\eps \to 0)
\end{multline*}
by (MM), the formulae $h(y) = \F(\sqrt{\rho})(\frac{y}{2t})$ and $h_\eps(y) = \F_{z \to y}(e^{i \frac{\eps^2 z^2}{4 t}} \sqrt{\rho(z)})(\frac{y}{2t})$, and the exchange between multiplication and derivative by the Fourier transform. Thus, $\ga_\eps \to 0$ and therefore claim (C) is proved, i.e., we obtain in summary
$$
   \lim_{\eps \to 0} a_\eps = \frac{-1}{4 \pi |t|} \lim_{\eps \to 0} c_\eps =
         \frac{- m(f)}{4 \pi |t|}   \int_0^\infty y \big( |h|^2\big)'(y)  \, dy,
$$
and analogously, $\lim_{\eps \to 0} a_\eps' = 
         \frac{- m(f)}{4 \pi |t|}   \int\limits_{-\infty}^0 y \big( |h|^2\big)'(y)  \, dy$. 
Thus, we combine  and arrive at  
$$
   \lim_{\eps \to 0} \dis{\mu^t_\eps}{f}  =
         \frac{- m(f)}{4 \pi |t|}   \int_\R y \big( |h|^2\big)'(y)  \, dy.
$$
It remains to determine the value of the integral, where we apply integration by parts and Parseval's identity, to obtain
\begin{multline*}
    \int_\R y \big( |h|^2\big)'(y)  \, dy = \int_\R y (h'(y) \ovl{h(y)} + h(y) \ovl{h'(y)}) \, dy 
    = - \int_\R h(y) (y \ovl{h(y)})' \, dy + \int_\R  h(y) y \ovl{h'(y)}\, dy\\
    = -  \int_\R h(y) \ovl{h(y)} \, dy - \int_\R  h(y) y \ovl{h'(y)}\, dy + \int_\R  h(y) y \ovl{h'(y)}\, dy = - \int_\R |h(y)|^2 \, dy \\
    = - \int_\R |\F(\sqrt{\rho})(\frac{y}{2t})|^2 \, dy = 
    - 2 |t| \, \norm{\F(\sqrt{\rho})}{2}^2  = - 2 |t| 2 \pi \norm{\sqrt{\rho}}{2}^2 = 
    - 4 |t| \pi \norm{\rho}{1}^2 = - 4 |t| \pi,
\end{multline*}
which completes the proof. 
\end{proof}

We may also give a weak* interpretation of the limit formula in Proposition \ref{PropAP} upon recalling a few facts from the theory of locally compact Abelian groups and Bohr compactifications (cf. \cite[Section 4.7]{Folland:95b}). The \emph{Bohr compactification} $b \R$ of $\R$ is obtained as the group of all (including also the discontinuous) characters on $\R$, i.e., group homomorphisms from $\R$ into the one-dimensional torus group $S^1$, and is equipped with the topology of pointwise convergence, which renders $b \R$ an Abelian compact Hausdorff topological group. The real line $\R$ is continuously embedded into $b \R$ as a dense subgroup, but the embedding is not a homeomorphism onto its image. A function in $C_b(\R)$ is almost periodic, if and only if it is the restriction to $\R$ of a (unique) continuous function on $b \R$; thus, we obtain an isometric isomorphism $AP(\R) \isom C(b\R)$, which in turn implies $AP(\R)' \isom C(b\R)' \isom M(b \R)$ (cf. \cite{Hewitt:53}). By abuse of notation, we consider $\mu^t_\eps$ as elements in $M(b \R)$. We claim that

\begin{multline}\tag{HB} \text{the net $(\mu^t_\eps)_{\eps \in \, ]0,1]}$ converges  to the normalized Haar measure on}\\ \text{the Bohr compactification $b\R$ with respect to the weak* topology in $M(b\R)$.}
\end{multline}
To see this, consider the linear functional $l \col C(b\R) \to \C$, defined by
$$
    l(h) := m(h \!\mid_\R) \quad (h \in C(b\R)).
$$
We clearly have that $l = \text{weak*-}\lim_{\eps \to 0} \mu^t_\eps$, $l$ is continuous (since $|l(h)| \leq \norm{h}{\infty}$), $l$ is positive, i.e., $l(h) \geq 0$ for every nonnegative $h \in C(b\R)$, and that $l$ is normalized, i.e., $l(1) = 1$. It remains to show that $l$ is also translation invariant, i.e., $l (h(. - z)) = l(h)$ for every $z \in b\R$, then the uniqueness of the normalized Haar measure $\la$ on the compact Abelian group $b\R$ in combination with the Riesz representation theorem imply
$$
    \forall h \in C(b \R) \col \quad  l(h) = \int_{b\R} h \, d\la.
$$
Since $b\R$ is compact, the map $z \mapsto h(.-z)$ is continuous $b\R \to C(b\R)$ for every $h \in C(b\R)$ (\cite[Proposition 2.6]{Folland:95b}), hence also the composition $G_h(z) := l(h(.-z))$ defines a continuous map $G_h \col b\R \to \C$. Invariance of $l$ with respect to translations $z \in \R$ follows from \cite[5.13, Equation (5.9)]{Katznelson:04} and means that $G_h(z) = G_h(0)$ for every $z$ in the dense subgroup $\R$ of $b\R$. Therefore, continuity of $G_h$ implies $l(h(.-z)) = G_h(z) = G_h(0) = l(h)$ for every $z \in b\R$, that is, translation invariance of $l$ and hence 
$$
     m(h\!\mid_\R) = \int_{b\R} h \, d\la \quad (h \in C(b \R)).
$$

\begin{remark}\label{hinttoabstract} A theorem in harmonic analysis by Blum-Eisenberg (cf.\  \cite[Theorem 1]{BE:74}) states that a sequence of probability measures $(\nu_k)_{k \in \N}$ on the locally compact Abelian group $G$ is weak* convergent to the Haar measure on the Bohr compactification $bG$ of $G$, if and only if for every nontrivial character $\chi$ on $G$ 
the sequence of Fourier transforms $(\FT{\nu_k}(\chi))_{k \in \N}$ converges to $0$. 
We will take up this line of argument in discussing the higher dimensional case in the following subsection. This implies that, in fact, we could deduce already from the result in Example \ref{ConTrigPol} the convergence of $\mu^t_\eps$ to the (normalized) Haar measure on $b\R$. This gives an independent proof of (HB), without additional regularity assumptions on $\rho$, and as a side effect also shows that $(\mu^t_\eps)_{\eps \in \, ]0,1]}$ is ergodic. Moreover, \cite[Theorem 16.3.1]{Dixmier:77} implies that $\mu^t_\eps$ converges to the unique invariant mean on $AP(\R)$. Thus, we obtain a stronger version of Proposition \ref{PropAP} even without additional requirements on $\rho$.
\end{remark}

\subsubsection{Direct application of the Bohr compactification in higher space dimensions} 

We will make use of the observation made in the previous remark to first prove the $n$-dimensional extension of (HB) and then deduce a generalization of Proposition \ref{PropAP}. In fact, all boils down to applying \cite[Theorem 1]{BE:74} (described in Remark \ref{hinttoabstract}) once the required convergence property of the Fourier transformed measures is established. 

\begin{lemma} \label{FTLemma} 

If $\mu = \de_a$ with arbitrary $a \in \R^n$ and we suppose that the basic condition (M) holds for $\rho$,  then 
$\lim\limits_{\eps \to 0} \F(\mu_\eps^t)(\xi) = 0$ for every $ t \neq 0$ and $\xi \neq 0$.
\end{lemma}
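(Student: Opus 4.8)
The plan is to compute the Fourier transform of $\mu_\eps^t$ explicitly and show it vanishes in the limit, following closely the calculation already performed in Example \ref{ConTrigPol} for the case $\mu = \de$, but now accommodating the translation by $a$. Recall that $\mu_\eps^t$ is the measure with density $|u_\eps(.,t)|^2$, where $u_\eps(.,t) = K(t) * \sqrt{\mu * \rho_\eps}$ and $\mu = \de_a$. First I would note that $\de_a * \rho_\eps = \rho_\eps(\,\cdot - a)$, so the initial datum is $\sqrt{\rho_\eps(\,\cdot - a)}$. Since $\F(\mu_\eps^t)(\xi) = \dis{\mu_\eps^t}{e^{-i\,\cdot\,\xi}}$ (up to H\"ormander's sign convention), the quantity to control is exactly the Fourier transform of $|u_\eps(.,t)|^2$ evaluated at $\xi$.

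The key step is the convolution identity from Example \ref{ConTrigPol}: writing $\F(|u_\eps(.,t)|^2) = \frac{1}{(2\pi)^n}\,\F(u_\eps(.,t)) * \F(\ovl{u_\eps(.,t)})$ and using \eqref{solFT}, the oscillatory factors $e^{-it|\cdot|^2}$ combine so that after the substitutions the phase depending on $|\xi|^2$ and $\xi\cdot y$ factors out of the modulus, leaving an expression of the form (a unimodular phase) times a convolution $\F(\sqrt{\rho_\eps(\cdot-a)}) \cdot \F(\text{reflected conjugate})$ evaluated at a point proportional to $t\xi$. The translation by $a$ only introduces an additional factor $e^{-i a \cdot \eta}$ in $\F(\sqrt{\rho_\eps(\cdot - a)})(\eta) = e^{-ia\cdot\eta}\F(\sqrt{\rho_\eps})(\eta)$, which has modulus $1$ and therefore does not affect the absolute-value estimate at the end. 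I would then arrive, exactly as in the $n=1$ computation, at a bound of the form
$$
  |\F(\mu_\eps^t)(\xi)| \le \big(|\sqrt{\rho}|  * |R\sqrt{\rho}|\big)\!\left(\frac{2t\xi}{\eps}\right),
$$
where $Rg(x) = g(-x)$ and the scaling $\rho_\eps(x) = \eps^{-n}\rho(x/\eps)$ produces the argument $2t\xi/\eps$.

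The conclusion then follows because $\sqrt{\rho} \in L^2(\R^n)$ by (M), so $|\sqrt{\rho}| * |R\sqrt{\rho}| \in C_0(\R^n)$ (the convolution of two $L^2$ functions is continuous and vanishes at infinity, as cited via \cite[14.10.7]{Dieudonne:V2E}); since $\xi \neq 0$ and $t \neq 0$, the argument $2t\xi/\eps$ tends to infinity in norm as $\eps \to 0$, forcing the right-hand side to $0$. I expect the main obstacle to be purely bookkeeping: carefully carrying out the multidimensional change of variables and keeping track of the oscillatory phases so that, after taking absolute values, every $t$- and $\xi$-dependent unimodular factor drops out and only the scaled convolution remains. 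The presence of the shift $a$ should be handled cleanly by the observation that it contributes only a modulus-one multiplier, so condition (M) alone — without the stronger (MM) — suffices, which is precisely the improvement flagged in Remark \ref{hinttoabstract}.
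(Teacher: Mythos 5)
Your proposal is correct and follows essentially the same route as the paper's proof: the convolution identity for $\F\bigl(|u_\eps(.,t)|^2\bigr)$ combined with \eqref{solFT}, recognition of the result as a unimodular phase times a scaled spatial convolution evaluated at a point proportional to $t\xi/\eps$, and the conclusion via $L^2(\R^n) \ast L^2(\R^n) \subseteq C_0(\R^n)$. The only cosmetic difference is the treatment of the shift $a$: you factor out the modulation phase $e^{-i a\cdot \eta}$ early (and these phases indeed combine across the two Fourier factors to the constant $e^{-i a\cdot\xi}$, so they drop out of the modulus), whereas the paper carries $\sqrt{\rho_\eps(\cdot - a)}$ through the computation and removes $a$ by a translation change of variables in the final integral --- both yield the same estimate.
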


\begin{proof} 

Let $\xi \neq 0$ and $t \neq 0$. Similarly as in Example \ref{ConTrigPol}, noting that $\mu \ast \rho_\eps (x) = \rho_\eps(x - a) =: T_a \rho_\eps (x)$ we obtain (again appealing to Equation \eqref{solFT}, to the fact that $\sqrt{T_a \rho_\eps}$ is real-valued, and employing the notation $Rg(x) = g(-x)$)
\begin{multline*}
  \F(\mu^t_\eps)(\xi) = 
    \F(u_\eps(.,t) \, \ovl{u_\eps(.,t)})(\xi) =  
    \frac{1}{(2 \pi)^n} \F(u_\eps(.,t)) \ast \F(\ovl{u_\eps(.,t)}) (\xi) \\ =
             \frac{1}{(2 \pi)^n}\,  \left(e^{- i t |.|^2}  \F(\sqrt{T_a \rho_\eps})\right) \ast
                         \left(e^{i t |.|^2}  R \ovl{\F(\sqrt{T_a \rho_\eps}})\right)(\xi)\\ =
              \frac{1}{(2 \pi)^n} \int_{\R^n} e^{- i t |y|^2 + i t |\xi - y|^2} 
              \, \F(\sqrt{T_a \rho_\eps})(y) \, 
              \ovl{\F(\sqrt{T_a \rho_\eps})(-\xi + y)} \, dy\\
              =  \frac{1}{(2 \pi)^n}\,  \int_{\R^n} e^{i t |\xi|^2 - 2 i t \dis{\xi}{y}} 
              \, \F(\sqrt{T_a \rho_\eps})(y) \, 
              \F(\sqrt{T_a \rho_\eps})(- y - \xi) \, dy\\ =
            e^{i t |\xi|^2}\, \F^{-1}\big(\F(\sqrt{T_a\rho_\eps}) \,
               \F(e^{i \dis{\xi}{.}} R \sqrt{T_a \rho_\eps})\big)(- 2 t \xi)\\
           =  e^{i t |\xi|^2} \sqrt{T_a \rho_\eps} \ast 
           \left( e^{i \dis{\xi}{.}} R \sqrt{T_a \rho_\eps}\right) (-2 t \xi). 
\end{multline*}

Therefore, we have upon an $\eps$-scaling followed by a translation of the variable of integration,
\begin{multline*}
   |\F(\mu^t_\eps)(\xi)| \leq \int_{\R^n} \sqrt{\rho (x + \frac{a}{\eps})} \, 
   \sqrt{\rho  \left(x + \frac{a}{\eps} + \frac{2 t \xi}{\eps}\right)}  \, dx = 
   \int_{\R^n} \sqrt{\rho (x)} \, 
   \sqrt{\rho  \left(x + \frac{2 t \xi}{\eps}\right)}  \, dx \\=
   (\sqrt{\rho} \ast R \sqrt{\rho}) \big(- \frac{2 t \xi}{\eps}\big) \to 0 
   \quad (\eps \to 0)
\end{multline*}
exactly as in Example \ref{ConTrigPol}, since $L^2(\R^n) \ast L^2(\R^n) \subseteq C_0(\R^n)$ (\cite[14.10.7]{Dieudonne:V2E}).

%
%
%
%
%
\end{proof}

We may again call on the \emph{Bohr compactification} $b \R^n$ of $\R^n$ (cf. \cite[Section 4.7]{Folland:95b}), an Abelian compact Hausdorff topological group, described as in the one-dimensional case mentioned above simply as the group of all characters on $\R^n$ equipped with the topology of pointwise convergence. Then $\R^n$ is continuously embedded into $b \R^n$ as a dense subgroup (but not homeomorphic onto its image). Considering $\mu^t_\eps$ as elements in $M(b \R^n)$, we may then apply\footnote{The result is about sequences of probability measures, but holds also for nets with index set $]0,1]$ (directed downward by $\eps \to 0$), since their convergence may equivalently be checked via sequences $(\eps_k)_{k\in \N}$ with $\eps_k \to 0$.} \cite[Theorem 1]{BE:74} to extract from Lemma \ref{FTLemma} a direct proof of the following 

\begin{proposition}\label{mainprop} \label{BohrCor2} If $\mu = \de_a$ ($a \in \R^n$), then the net $(\mu^t_\eps)_{\eps \in \, ]0,1]}$ converges  to the normalized Haar measure on the Bohr compactification $b\R^n$ with respect to the weak* topology in $M(b\R^n)$. 
\end{proposition}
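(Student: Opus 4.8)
The plan is to derive Proposition~\ref{mainprop} as a direct consequence of the Blum--Eisenberg criterion recalled in Remark~\ref{hinttoabstract}, feeding it the Fourier-transform decay established in Lemma~\ref{FTLemma}. The key structural observation is that the Bohr compactification $b\R^n$ is precisely the spectrum of the $C^*$-algebra $AP(\R^n)$ of almost periodic functions, so that $M(b\R^n) \isom AP(\R^n)'$, and the characters of $\R^n$ (the functions $x \mapsto e^{i\dis{\xi}{x}}$ for $\xi \in \R^n$) are dense-generating in the sense that trigonometric polynomials are dense in $AP(\R^n)$. Convergence in the weak* topology of $M(b\R^n)$ therefore needs only to be checked against these characters, which is exactly what the Fourier transform $\F(\mu^t_\eps)(\xi)$ records.

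First I would note that $(\mu^t_\eps)_{\eps \in \,]0,1]}$ is a net of probability measures on $\R^n$ (this was established in Section~3.1 via unitarity of $U_t$, giving $\mu^t_\eps(\R^n)=1$), hence a net of probability measures on the compact group $b\R^n$ under the identification. Next I would invoke \cite[Theorem~1]{BE:74}: a net of probability measures on the locally compact Abelian group $\R^n$ converges weak* to the normalized Haar measure on $b\R^n$ if and only if, for every \emph{nontrivial} character $\chi$ of $\R^n$, the Fourier transforms $\FT{\nu}(\chi)$ converge to $0$. As flagged in the footnote to the statement, the theorem is stated for sequences but transfers to nets indexed by $]0,1]$ directed by $\eps\to 0$, since net convergence to a fixed limit can be tested along arbitrary sequences $\eps_k\to 0$. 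A nontrivial character of $\R^n$ is exactly $x \mapsto e^{i\dis{\xi}{x}}$ for some $\xi\neq 0$, so the required hypothesis is precisely $\lim_{\eps\to 0}\F(\mu^t_\eps)(\xi)=0$ for all $\xi\neq 0$ — which is the content of Lemma~\ref{FTLemma}, valid for any fixed $t\neq 0$ under the single assumption (M).

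The remaining point to reconcile is the convention for the Fourier transform: Lemma~\ref{FTLemma} is phrased in H\"ormander's convention, whereas the Blum--Eisenberg criterion pairs measures against characters $\chi$. Since a nontrivial character corresponds to a nonzero frequency $\xi$, and the vanishing $\F(\mu^t_\eps)(\xi)\to 0$ holds for \emph{every} $\xi\neq 0$, the normalization factor relating the two conventions is immaterial; I would simply remark that the hypothesis of \cite[Theorem~1]{BE:74} is verified for every nontrivial character. Thus the theorem applies and delivers weak* convergence of $(\mu^t_\eps)$ to the normalized Haar measure on $b\R^n$.

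I do not expect a serious obstacle here, since both ingredients are already in place: the heavy lifting — the convolution computation and the bound $|\F(\mu^t_\eps)(\xi)| \leq (\sqrt{\rho}\ast R\sqrt{\rho})(-2t\xi/\eps)\to 0$ via $L^2\ast L^2\subseteq C_0$ — is done in Lemma~\ref{FTLemma}, and the abstract harmonic-analysis input is quoted from \cite{BE:74}. The only care required is the bookkeeping just described: confirming that ``nontrivial character'' matches ``$\xi\neq 0$'' and that the net-versus-sequence issue is handled by the footnote. If anything demands a word of justification it is the identification $M(b\R^n)\isom AP(\R^n)'$ together with the fact that checking convergence against characters suffices, but this rests on the density of trigonometric polynomials in $AP(\R^n)$ and equicontinuity of the bounded net $\{\mu^t_\eps\}$ (all norm $1$), exactly as exploited in the one-dimensional discussion preceding (HB).
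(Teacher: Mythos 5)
Your proposal is correct and follows essentially the same route as the paper: the paper's proof of Proposition~\ref{mainprop} consists precisely of viewing $\mu^t_\eps$ as elements of $M(b\R^n)$ and applying the Blum--Eisenberg criterion \cite[Theorem~1]{BE:74} (with the net-versus-sequence issue handled by the same footnote) to the Fourier-transform decay of Lemma~\ref{FTLemma}. The extra bookkeeping you supply on characters, conventions, and the identification $M(b\R^n)\isom AP(\R^n)'$ is consistent with the paper's surrounding discussion and introduces no gap.
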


Following from the general definitions and results in \cite[Sections 16.1-3]{Dixmier:77}, the space $AP(\R^n)$ of \emph{almost periodic functions} on $\R^n$ is defined as the uniform closure of the characters on $\R^n$ in $C_b(\R^n)$, i.e., the uniform closure of  the subspace of trigonometric polynomials also in this case. Moreover, a function in $C_b(\R^n)$ is almost periodic, if and only if it is the restriction to $\R^n$ of a unique continuous function on $b \R^n$, which yields an isometric isomorphism $AP(\R^n) \isom C(b\R^n)$ and implies $AP(\R^n)' \isom C(b\R^n)' \isom M(b \R^n)$. Therefore, we easily obtain from Proposition \ref{BohrCor2} and the statement in \cite[Theorem 16.3.1]{Dixmier:77} on the unique invariant mean $m \col AP(\R^n) \to \C$ an immediate proof of the following

\begin{theorem}\label{mainthm} Suppose that $\mu = \de_a$ ($a \in \R^n$) and $\rho$ satisfies (M), 
then  
$$
    \forall f \in AP(\R^n): \quad \lim_{\eps \to 0}\, \dis{\mu^t_\eps}{f} = m(f).
$$
\end{theorem}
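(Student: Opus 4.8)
The plan is to read the statement off directly from Proposition \ref{BohrCor2}, transporting the weak* convergence of measures on $b\R^n$ back to the pairing with almost periodic functions on $\R^n$ and then identifying the resulting Haar integral with the invariant mean $m$. Since $\mu = \de_a$ satisfies the hypotheses of Proposition \ref{BohrCor2}, the hard analytic work is already available and the remaining argument is essentially a translation through the relevant dualities.

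First I would fix $f \in AP(\R^n)$ and invoke the isometric isomorphism $AP(\R^n) \isom C(b\R^n)$ to obtain the unique continuous extension $\tilde f \in C(b\R^n)$ with $\tilde f \!\mid_{\R^n} = f$. Under the dual isomorphism $AP(\R^n)' \isom M(b\R^n)$, and viewing each $\mu^t_\eps$ as an element of $M(b\R^n)$ as in the preceding discussion, the pairing $\dis{\mu^t_\eps}{f}$ coincides with the integral $\int_{b\R^n} \tilde f \, d\mu^t_\eps$. This reduces the claim to a statement about integrals of continuous functions over the compact group $b\R^n$.

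Next I would apply Proposition \ref{BohrCor2}: since $\mu = \de_a$ and $\rho$ satisfies (M), the net $(\mu^t_\eps)_{\eps \in \, ]0,1]}$ converges weak* in $M(b\R^n)$ to the normalized Haar measure $\la$ on $b\R^n$. By the very definition of the weak* topology this gives $\lim_{\eps \to 0} \int_{b\R^n} \tilde f \, d\mu^t_\eps = \int_{b\R^n} \tilde f \, d\la$ for every $\tilde f \in C(b\R^n)$, hence $\lim_{\eps \to 0} \dis{\mu^t_\eps}{f} = \int_{b\R^n} \tilde f \, d\la$.

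Finally I would close the argument by identifying the Haar integral with the invariant mean: by \cite[Theorem 16.3.1]{Dixmier:77} the unique translation-invariant mean $m$ on $AP(\R^n)$ is exactly $m(f) = \int_{b\R^n} \tilde f \, d\la$, the $n$-dimensional analogue of the relation $m(h\!\mid_\R) = \int_{b\R} h \, d\la$ established for (HB) in the one-dimensional case. Combining the two limits yields $\lim_{\eps \to 0} \dis{\mu^t_\eps}{f} = m(f)$. The genuine analytic content has already been spent in Lemma \ref{FTLemma} (the $L^2(\R^n) \ast L^2(\R^n) \subseteq C_0(\R^n)$ decay of the Fourier-transformed measures) and in the passage to Proposition \ref{BohrCor2} via the Blum--Eisenberg criterion, so the only point here requiring care is the standard but structural identification of $\la$-integration with $m$; I expect no essential obstacle beyond invoking that fact correctly.
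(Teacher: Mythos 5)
Your proposal is correct and follows essentially the same route as the paper, which likewise obtains Theorem \ref{mainthm} ``immediately'' from Proposition \ref{BohrCor2} together with the isomorphisms $AP(\R^n) \isom C(b\R^n)$, $AP(\R^n)' \isom M(b\R^n)$ and the identification of the Haar integral with the unique invariant mean via \cite[Theorem 16.3.1]{Dixmier:77}. Your write-up merely makes explicit the dualities and the weak*-to-pairing translation that the paper leaves implicit.
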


Finally we briefly illustrate why the conclusions of Theorem \ref{mainthm} and Proposition \ref{mainprop}  cannot hold for arbitrary initial probability measures $\mu$ on $\R^n$. 

\begin{remark} The statement in \cite[Theorem 1]{BE:74} is that null convergence of the Fourier transforms $\F(\mu^t_\eps)(\xi)$ at every $\xi \neq 0$ is equivalent to weak* convergence of $\mu^t_\eps$ to the Haar measure. Thus, failure of the former for specific initial probability measures $\mu^0_\eps = \mu$ ($\neq \de_a$) allows to deduce that $\mu^t_\eps$ does not converge to the invariant mean in that case.  For example, let $\mu$ be given by a nonnegative density function $h \in C_c(\R^n)$ (times the Lebesgue measure) and suppose that $\rho \in C_c(\R^n)$ (in addition to (M)). Then we claim that the conclusion of Lemma \ref{FTLemma} cannot hold for $\mu^t_\eps$ constructed from solutions of the Schr\"odinger equation according to the regularization of  $\mu = h \, dx$ via $\rho$; more precisely, we claim that the following holds:

\noindent ($*$) \quad for every $t \neq 0$ there is $\xi \in \R^n$, $\xi \neq 0$, such that $\F(\mu_\eps^t)(\xi) \not\to 0$ ($\eps \to 0$).

\noindent By a calculation similar to that in the beginning of the proof of Lemma \ref{FTLemma},
$$
  \F(\mu^t_\eps)(\xi) = e^{i t |\xi|^2} \sqrt{h \ast \rho_\eps} \ast 
           \left( e^{i \dis{\xi}{.}} R \sqrt{h \ast \rho_\eps}\right) (-2 t \xi). 
$$
Due to uniform convergence $h \ast \rho_\eps \to h$ as $\eps \to 0$ and compactness of supports for all factors in the convolutions, we obtain
$$
       \lim_{\eps \to 0} \F(\mu^t_\eps)(\xi) = e^{i t |\xi|^2} \sqrt{h} \ast 
           \left( e^{i \dis{\xi}{.}} R \sqrt{h}\right) (-2 t \xi).
$$
Suppose ($*$) were false, then the above limit relation implies
$$
  \forall \xi \neq 0: \quad 0 = 
  \int  e^{i \dis{\xi}{x}} \sqrt{h(x)} \sqrt{h(x + 2 t \xi)} \, dx.
$$
But $h$ is a probability density and a continuous functions, hence dominated convergence  yields the contradiction
$$
   0 = \lim_{0 \neq \xi \to 0} \int  e^{i \dis{\xi}{x}} \sqrt{h(x)} \sqrt{h(x + 2 t \xi)} \, dx
   = \int  \sqrt{h(x)} \sqrt{h(x)} \, dx = \int  h(x)  \, dx = 1.
$$
\end{remark}


\newcommand{\SortNoop}[1]{}
\begin{bibdiv}
\begin{biblist}

\bib{Adams:75}{book}{
      author={Adams, R.},
       title={Sobolev spaces},
   publisher={Academic Press},
     address={New York},
        date={1975},
}

\bib{Albeverio:88}{incollection}{
      author={Albeverio, Sergio},
       title={Applications of nonstandard analysis in mathematical physics},
        date={1988},
   booktitle={Nonstandard analysis and its applications ({H}ull, 1986)},
      series={London Math. Soc. Stud. Texts},
      volume={10},
   publisher={Cambridge Univ. Press, Cambridge},
       pages={182\ndash 220},
         url={http://dx.doi.org/10.1017/CBO9781139172110.005},
      review={\MR{971067}},
}

\bib{Bauer:01}{book}{
      author={Bauer, Heinz},
       title={Measure and integration theory},
      series={de Gruyter Studies in Mathematics},
   publisher={Walter de Gruyter \& Co.},
     address={Berlin},
        date={2001},
      volume={26},
}

\bib{BE:74}{article}{
      author={Blum, J.},
      author={Eisenberg, B.},
       title={Generalized summing sequences and the mean ergodic theorem},
        date={1974},
     journal={Proc. Amer. Math. Soc.},
      volume={42},
       pages={423\ndash 429},
}

\bib{Bu:96}{article}{
      author={Bu, Charles},
       title={Generalized solutions to the cubic {S}chr\"odinger equation},
        date={1996},
     journal={Nonlinear Anal.},
      volume={27},
      number={7},
       pages={769\ndash 774},
}

\bib{Conway:90}{book}{
      author={Conway, J.~B.},
       title={A course in functional analysis},
     edition={Second},
      series={Graduate Texts in Mathematics},
   publisher={Springer-Verlag, New York},
        date={1990},
      volume={96},
        ISBN={0-387-97245-5},
      review={\MR{1070713}},
}

\bib{DL:V5}{book}{
      author={Dautray, Robert},
      author={Lions, Jacques-Louis},
       title={Mathematical analysis and numerical methods for science and
  technology. {V}ol. 5},
   publisher={Springer-Verlag},
     address={Berlin},
        date={1992},
        ISBN={3-540-50205-X; 3-540-66101-8},
      review={\MR{MR1156075 (92k:00006)}},
}

\bib{dHHO:08}{article}{
      author={de~Hoop, M.~V.},
      author={H{\"o}rmann, G.},
      author={Oberguggenberger, M.},
       title={Evolution systems for paraxial wave equations of
  {S}chr{\"o}dinger-type with non-smooth coefficients},
        date={2008},
     journal={J. Differential Equations},
      volume={245},
      number={6},
       pages={1413\ndash 1432},
}

\bib{Dieudonne:V2E}{book}{
      author={Dieudonn{\'e}, J.},
       title={Treatise on analysis. {V}ol. {II}},
   publisher={Academic Press [Harcourt Brace Jovanovich, Publishers], New
  York-London},
        date={1976},
        ISBN={0-12-215502-5},
        note={Enlarged and corrected printing, Pure and Applied Mathematics,
  10-II},
      review={\MR{0530406}},
}

\bib{Dieudonne:V6E}{book}{
      author={Dieudonn{\'e}, J.},
       title={Treatise on analysis. {V}ol. {VI}},
   publisher={Academic Press, Inc. [Harcourt Brace Jovanovich, Publishers], New
  York-London},
        date={1978},
        ISBN={0-12-215506-8},
        note={Pure and Applied Mathematics, 10-{{\rm{V}}I}},
      review={\MR{0621691}},
}

\bib{Dixmier:77}{book}{
      author={Dixmier, J.},
       title={{$C^*$}-algebras},
   publisher={North-Holland, Amsterdam, New York, Oxford},
        date={1977},
}

\bib{Elstrodt:11}{book}{
      author={Elstrodt, J.},
       title={Ma\ss- und {I}ntegrationstheorie},
     edition={Seventh},
      series={Springer-Lehrbuch. [Springer Textbook]},
   publisher={Springer-Verlag, Berlin},
        date={2011},
        note={Grundwissen Mathematik. [Basic Knowledge in Mathematics]},
}

\bib{Folland:95b}{book}{
      author={Folland, {G. B.}},
       title={A course in abstract harmonic analysis},
   publisher={CRC Press},
     address={Boca Raton},
        date={1995},
}

\bib{Folland:99}{book}{
      author={Folland, G.~B.},
       title={Real analysis},
   publisher={John Wiley and Sons},
     address={New York},
        date={1999},
}

\bib{Garetto:05b}{article}{
      author={Garetto, Claudia},
       title={Topological structures in {C}olombeau algebras: topological
  {$\tilde{\Bbb C}$}-modules and duality theory},
        date={2005},
     journal={Acta Appl. Math.},
      volume={88},
      number={1},
       pages={81\ndash 123},
}

\bib{GKOS:01}{book}{
      author={Grosser, M.},
      author={Kunzinger, M.},
      author={Oberguggenberger, M.},
      author={Steinbauer, R.},
       title={Geometric theory of generalized functions},
    subtitle={with applications to relativity},
   publisher={Kluwer},
     address={Dordrecht},
        date={2001},
}

\bib{GR:02}{article}{
      author={Gr{\"u}bl, Gebhard},
      author={Rheinberger, Klaus},
       title={Time of arrival from {B}ohmian flow},
        date={2002},
        ISSN={0305-4470},
     journal={J. Phys. A},
      volume={35},
      number={12},
       pages={2907\ndash 2924},
         url={http://dx.doi.org/10.1088/0305-4470/35/12/313},
      review={\MR{1914182}},
}

\bib{Hewitt:53}{article}{
      author={Hewitt, E.},
       title={Linear functions on almost periodic functions},
        date={1953},
        ISSN={0002-9947},
     journal={Trans. Amer. Math. Soc.},
      volume={74},
       pages={303\ndash 322},
      review={\MR{0054169}},
}

\bib{Hoermander:V1}{book}{
      author={H{\"o}rmander, L.},
       title={The analysis of linear partial differential operators},
     edition={Second},
   publisher={Springer-Verlag},
        date={1990},
      volume={I},
}

\bib{Hoermann:11}{article}{
      author={H{\"o}rmann, G{\"u}nther},
       title={The {C}auchy problem for {S}chr\"odinger-type partial
  differential operators with generalized functions in the principal part and
  as data},
        date={2011},
     journal={Monatsh. Math.},
      volume={163},
      number={4},
       pages={445\ndash 460},
}

\bib{Katznelson:04}{book}{
      author={Katznelson, Yitzhak},
       title={An introduction to harmonic analysis},
     edition={3},
      series={Cambridge Mathematical Library},
   publisher={Cambridge University Press, Cambridge},
        date={2004},
        ISBN={0-521-83829-0; 0-521-54359-2},
         url={http://dx.doi.org/10.1017/CBO9781139165372},
      review={\MR{2039503}},
}

\bib{KS:99}{article}{
      author={Kunzinger, Michael},
      author={Steinbauer, Roland},
       title={A note on the {P}enrose junction conditions},
        date={1999},
        ISSN={0264-9381},
     journal={Classical Quantum Gravity},
      volume={16},
      number={4},
       pages={1255\ndash 1264},
      review={\MR{MR1696153 (2000c:83023)}},
}

\bib{NPR:05}{article}{
      author={Nedeljkov, M.},
      author={Pilipovi{\'c}, S.},
      author={Rajter-{\'C}iri{\'c}, D.},
       title={Heat equation with singular potential and singular data},
        date={2005},
     journal={Proc. Roy. Soc. Edinburgh Sect. A},
      volume={135},
      number={4},
       pages={863\ndash 886},
}

\bib{O:89}{article}{
      author={Oberguggenberger, M.},
       title={Hyperbolic systems with discontinuous coefficients: generalized
  solutions and a transmission problem in acoustics},
        date={1989},
     journal={J. Math. Anal. Appl.},
      volume={142},
       pages={452\ndash 467},
}

\bib{O:92}{book}{
      author={Oberguggenberger, M.},
       title={Multiplication of distributions and applications to partial
  differential equations},
   publisher={Longman Scientific {\&} Technical},
        date={1992},
}

\bib{Rauch:91}{book}{
      author={Rauch, Jeffrey},
       title={Partial differential equations},
      series={Graduate Texts in Mathematics},
   publisher={Springer-Verlag},
     address={New York},
        date={1991},
      volume={128},
}

\bib{Steinbauer:97}{article}{
      author={Steinbauer, R.},
       title={{The ultrarelativistic Reissner-Nordstr{\o}m field in the
  Colombeau algebra.}},
        date={1997},
     journal={J. Math. Phys.},
      volume={38},
      number={3},
       pages={1614\ndash 1622},
}

\bib{Steinbauer:98}{article}{
      author={Steinbauer, R.},
       title={Geodesics and geodesic deviation for impulsive gravitational
  waves},
        date={1998},
        ISSN={0022-2488},
     journal={J. Math. Phys.},
      volume={39},
      number={4},
       pages={2201\ndash 2212},
      review={\MR{MR1614790 (99a:83027)}},
}

\bib{SV:06}{article}{
      author={Steinbauer, R.},
      author={Vickers, J.~A.},
       title={The use of generalized functions and distributions in general
  relativity},
        date={2006},
     journal={Classical Quantum Gravity},
      volume={23},
      number={10},
       pages={R91\ndash R114},
}

\bib{Sto:06b}{article}{
      author={Stojanovi{\'c}, Mirjana},
       title={Nonlinear {S}chr\"odinger equation with singular potential and
  initial data},
        date={2006},
     journal={Nonlinear Anal.},
      volume={64},
      number={7},
       pages={1460\ndash 1474},
}

\bib{Sto:06a}{article}{
      author={Stojanovi{\'c}, Mirjana},
       title={Perturbed {S}chr\"odinger equation with singular potential and
  initial data},
        date={2006},
     journal={Commun. Contemp. Math.},
      volume={8},
      number={4},
       pages={433\ndash 452},
}

\bib{Thirring:02}{book}{
      author={Thirring, Walter},
       title={Quantum mathematical physics},
     edition={Second},
   publisher={Springer-Verlag, Berlin},
        date={2002},
        ISBN={3-540-43078-4},
         url={http://dx.doi.org/10.1007/978-3-662-05008-8},
        note={Atoms, molecules and large systems, Translated from the 1979 and
  1980 German originals by Evans M. Harrell II},
      review={\MR{2133871}},
}

\bib{Treves:06}{book}{
      author={Tr{\`e}ves, F.},
       title={Topological vector spaces, distributions and kernels},
   publisher={Dover Publications, Inc., Mineola, NY},
        date={2006},
        ISBN={0-486-45352-9},
        note={Unabridged republication of the 1967 original},
      review={\MR{2296978}},
}

\end{biblist}
\end{bibdiv}

\end{document}